\documentclass[a4paper,11pt, english]{article}
\usepackage[utf8]{inputenc}
\usepackage[T1]{fontenc}
\usepackage{graphicx}
\usepackage{stmaryrd}
\usepackage[a4paper]{geometry}
\usepackage{amsmath,amsfonts,amssymb,amsthm,epsfig,epstopdf,url,array}
\usepackage{rotating}
\usepackage[colorlinks=true,citecolor=red,linkcolor=blue,pdfpagetransition=Blinds]{hyperref}
\usepackage{cleveref}
\usepackage{nameref}
\usepackage{enumitem}
\usepackage{comment}
\Crefname{paragraph}{Section}{Sections}
\usepackage{fancyhdr}
\usepackage{cases}
\usepackage{mathrsfs}

\usepackage{pgfplots}

\usepackage{fullpage}

\newcommand{\ensemblenombre}[1]{\mathbb{#1}}
\newcommand{\N}{\ensemblenombre{N}}

\newcommand\inter[1]{\llbracket #1\rrbracket}

\newcommand{\dive}[1]{\mathrm{div}}

\providecommand{\keywords}[1]{\noindent {\textit{Keywords:}} #1}

\usepackage{aliascnt}
\usepackage{cleveref}

\theoremstyle{plain}

\newtheorem{prop}{Proposition}[section]
\newaliascnt{theo}{prop}
\newtheorem{theo}[theo]{Theorem}
\aliascntresetthe{theo}

\newaliascnt{lem}{prop}
\newtheorem{lem}[lem]{Lemma}
\aliascntresetthe{lem}

\newaliascnt{defprop}{prop}

\aliascntresetthe{defprop}

\newaliascnt{cor}{prop}
\newtheorem{cor}[cor]{Corollary}
\aliascntresetthe{cor}

\newaliascnt{rmk}{prop}
\newtheorem{rmk}[rmk]{Remark}
\aliascntresetthe{rmk}

\theoremstyle{definition}

\newaliascnt{defi}{prop}
\newtheorem{defi}[defi]{Definition}
\aliascntresetthe{defi}

\newaliascnt{app}{prop}

\aliascntresetthe{app}

\newaliascnt{claim}{prop}
\newtheorem{claim}[claim]{Fact}
\aliascntresetthe{claim}

\newaliascnt{ass}{prop}

\aliascntresetthe{ass}

\crefname{proposition}{Proposition}{Propositions}
\Crefname{proposition}{Proposition}{Propositions}

\crefname{theo}{Theorem}{Theorems}
\Crefname{theo}{Theorem}{Theorems}

\crefname{lemma}{Lemma}{Lemmas}
\Crefname{lemma}{Lemma}{Lemmas}

\crefname{defprop}{Definition--Proposition}{Definition--Propositions}
\Crefname{defprop}{Definition--Proposition}{Definition--Propositions}

\crefname{cor}{Corollary}{Corollaries}
\Crefname{cor}{Corollary}{Corollaries}

\crefname{rmk}{Remark}{Remarks}
\Crefname{rmk}{Remark}{Remarks}

\crefname{defi}{Definition}{Definitions}
\Crefname{defi}{Definition}{Definitions}

\crefname{app}{Application}{Applications}
\Crefname{app}{Application}{Applications}

\crefname{claim}{Claim}{Claims}
\Crefname{claim}{Claim}{Claims}

\crefname{ass}{Assumption}{Assumptions}
\Crefname{ass}{Assumption}{Assumptions}

\def\dt{\textnormal{d}t}
\def\d{\textnormal{d}}
\def\esp{{\mathbb{E}}}
\def\dom{\mathcal{D}}
\def\fil{\mathbb{F}}
\newcommand{\F}{\mathcal{F}}
\newcommand{\vertiii}[1]{{\left\vert\kern-0.25ex\left\vert\kern-0.25ex\left\vert #1 
    \right\vert\kern-0.25ex\right\vert\kern-0.25ex\right\vert}}

\makeatletter
\let\original@addcontentsline\addcontentsline
\newcommand{\dummy@addcontentsline}[3]{}
\newcommand{\DeactivateToc}{\let\addcontentsline\dummy@addcontentsline}
\newcommand{\ActivateToc}{\let\addcontentsline\original@addcontentsline}
\makeatother

\begin{document}

\title{Stability for the stochastic heat equation with multiplicative noise via finite-dimensional feedback}
\author{V\'ictor Hern\'andez-Santamar\'ia\thanks{V. Hern\'andez-Santamar\'ia is supported by Project CBF2023-2024-116 of SECIHTHI and by UNAM-DGAPA-PAPIIT grants IA103826 and IN102925 (Mexico).} \and  K\'evin Le Balc'h\thanks{K. Le Balc'h received support from Project ANR-20-CE40-0009: TRECOS.} \and Liliana Peralta\thanks{L. Peralta has received support from UNAM-DGAPA-PAPIIT IA103826.}}

\maketitle

\begin{abstract}
In this paper, we study the long-time behavior of a stochastic heat equation with multiplicative noise and localized control. We begin by analyzing the uncontrolled dynamics and derive explicit decay rates for both mean-square and almost sure exponential stability. These estimates show that the two notions of stability may hold under different conditions on the parameters, reflecting the interplay between the drift and the multiplicative noise. We then introduce a finite-dimensional feedback control acting on a measurable subset of positive measure, built from finitely many Fourier modes of the solution. In particular, we show that the number of controlled modes determines the decay rate and allows for arbitrarily fast stabilization in the mean-square sense. As a consequence, almost sure exponential stability is recovered via a probabilistic argument, so that both notions of stability are achieved within the same framework and with the same decay rate. As an application, we provide a new proof of controllability for the stochastic heat equation based on an iterative construction of adapted controls in feedback form, avoiding the use of the adjoint equation.
\end{abstract}

\keywords{Finite-dimensional control; rapid stabilization; spectral methods; almost sure convergence} 

\smallskip
\noindent
\textit{2020 MSC:} {60H15, 35B35, 93D15, 93B05}

\footnotesize
\tableofcontents
\normalsize

\section{Introduction}\label{sec:intro}

\subsection{Motivation}
The analysis of the long-time behavior of stochastic parabolic equations, and in particular the relationship between different probabilistic notions of stability, is a central topic in the theory of stochastic partial differential equations. To provide a concrete reference point, we first consider the following deterministic parabolic system
\begin{equation}\label{eq:det_parab}
\begin{cases}
y_t=\Delta y + c_0 y &\textnormal{in } (0,T)\times \dom, \\
y=0 & \textnormal{in } (0,T)\times \Gamma, \\
y(0)=y_0 &\textnormal{in } \dom,
\end{cases}
\end{equation}
where $T>0$, $\dom$ is a bounded, connected, open subset of $\mathbb R^d$ ($d\geq 1$) with smooth boundary $\Gamma=\partial \dom$, and $c_0\in\mathbb R$. By classical arguments, for every $y_0 \in L^2(\dom)$, system \eqref{eq:det_parab} is globally well-posed and admits a unique weak solution $y \in C([0,T];L^2(\dom))$. Moreover, this solution satisfies
\begin{equation}\label{eq:energy}
\|y(t)\|^2_{L^2(\dom)}\leq e^{-2 (\tau_1-c_0) t}\|y_0\|_{L^2(\dom)}^2,
\quad \textnormal{for all } t\geq 0,
\end{equation}
where $\tau_1$ denotes the first eigenvalue of the Laplacian with homogeneous Dirichlet boundary conditions. In particular, if $c_0<\tau_1$, the solution decays exponentially as $t\to+\infty$, whereas if $c_0>\tau_1$, certain solutions may grow exponentially fast.

In stochastic modeling, uncertainties and random fluctuations are naturally incorporated through random perturbations of the coefficients. Following classical ideas, a natural way to incorporate such effects is to model the potential $c_0$ as
\begin{equation*}
c_0 = c + a \dot{W}(t),
\end{equation*}
where $c \in \mathbb{R}$, $a \in \mathbb{R} \setminus \{0\}$, and $\dot{W}(t)$ denotes the (formal) time derivative of a standard one-dimensional Brownian motion $W(t)$. Hereinafter, $W(t)$ is defined on a complete filtered probability space $(\Omega, \mathcal{F}, \fil, \mathbb{P})$, where $\fil = \{\mathcal F_t\}_{t \ge 0}$ is the natural filtration generated by $W$, augmented by all $\mathbb{P}$-null sets.

We are thus led to consider the stochastic partial differential equation, understood in the It\^o sense,
\begin{equation}\label{eq:spde}
\begin{cases}
\d y = (\Delta y + c y)\dt + a y \d W(t) & \textnormal{in } (0,T)\times \dom, \\
y = 0 & \textnormal{on } (0,T)\times \Gamma, \\
y(0) = y_0 & \textnormal{in } \dom,
\end{cases}
\end{equation}
which, for any $T>0$ and initial condition $y_0 \in L^2_{\mathcal F_0}(\Omega;L^2(\dom))$, admits a unique weak solution (see \Cref{def:weak_solution}).

This raises the question of how the stability properties of \eqref{eq:det_parab} are affected in the stochastic setting. In this context, stability can be interpreted in several ways. Two commonly used notions are mean-square exponential stability, which concerns the decay of the expected squared norm of the solution, and almost sure exponential stability, which describes the long-time behavior of individual sample paths.

\begin{defi}[Mean-square exponential stability]
The solution $y$ of \eqref{eq:spde} is said to be {mean-square exponentially stable} if there exist constants $C>0$ and $\mu>0$ such that
\begin{equation*}
\mathbb{E}\big(\|y(t)\|_{L^2(\dom)}^2\big)
\le C\, e^{-\mu t} \,
\mathbb{E}\big(\|y_0\|_{L^2(\dom)}^2\big),
\quad \text{for all } t \ge 0.
\end{equation*}
\end{defi}

\begin{defi}[Almost sure exponential stability]\label{ases}
The solution $y$ of \eqref{eq:spde} is said to be {almost surely exponentially stable} if there exists $\mu>0$ such that
\begin{equation*}
\limsup_{t \to \infty} \frac{1}{t} \log \|y(t)\|_{L^2(\dom)}^2
\le -\mu
\quad \text{a.s.}
\end{equation*}
\end{defi}

The following facts describe the long-time behavior of solutions to \eqref{eq:spde} and highlight that mean-square and almost sure stability may hold under different conditions on the parameters $c$, $a$, and $\tau_1$. For completeness, we include short proofs in \Cref{app_stab}. While the mean-square estimate follows from standard energy arguments, the almost sure stability result is obtained by a direct computation and does not seem to be explicitly available in this form in the literature.
\begin{claim}
If $2\tau_1 > a^2 + 2c$, then the solution to \eqref{eq:spde} is mean-square exponentially stable with decay rate $\mu = 2(\tau_1 - c) - a^2$.
\end{claim}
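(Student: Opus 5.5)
The plan is to apply Itô's formula to the squared norm $\|y(t)\|_{L^2(\dom)}^2$, take expectations, and close with Poincaré's inequality and Gronwall's lemma. Formally, the Itô formula for the functional $\|\cdot\|_{L^2(\dom)}^2$ along \eqref{eq:spde} gives
\begin{equation*}
\d \|y\|_{L^2(\dom)}^2 = \Big( -2\|\nabla y\|_{L^2(\dom)}^2 + 2c\|y\|_{L^2(\dom)}^2 + a^2\|y\|_{L^2(\dom)}^2\Big)\dt + 2a\|y\|_{L^2(\dom)}^2 \,\d W(t).
\end{equation*}
Here the term $a^2\|y\|^2$ is the Itô correction coming from the quadratic variation of the noise $ay\,\d W$, and the boundary terms vanish thanks to the homogeneous Dirichlet condition.

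Next I take expectations. The stochastic integral $\int_0^t 2a\|y(s)\|^2\,\d W(s)$ is a martingale with zero mean, because its integrand satisfies $\esp\int_0^T \|y\|^4\,\d s<\infty$. If fourth moments are not directly available from the well-posedness theory, I would localize with stopping times $\sigma_n=\inf\{t:\|y(t)\|\ge n\}$ and pass to the limit by Fatou's lemma and monotone convergence. I then use the Poincaré inequality $\|\nabla y\|^2\ge \tau_1\|y\|^2$, which is the variational characterization of the first Dirichlet eigenvalue. Setting $\phi(t)=\esp\|y(t)\|^2$, this yields
\begin{equation*}
\phi(t)\le \phi(s) - \big(2(\tau_1-c)-a^2\big)\int_s^t \phi(r)\,\d r, \qquad 0\le s\le t.
\end{equation*}

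Gronwall's lemma, in integral form since $\phi$ is continuous, then gives $\phi(t)\le e^{-\mu t}\phi(0)$ with $\mu=2(\tau_1-c)-a^2$. This rate is positive exactly under the hypothesis $2\tau_1>a^2+2c$, so the claim holds with $C=1$.

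The main obstacle is justifying Itô's formula rigorously, since the weak solution is only in $L^2(\Omega;C([0,T];L^2))\cap L^2(\Omega;L^2(0,T;H^1_0))$ and $\Delta y$ lives in $H^{-1}$. I would invoke the Krylov–Rozovskii Itô formula for variational solutions in the Gelfand triple $H^1_0\subset L^2\subset H^{-1}$. Alternatively, I would use a Galerkin approximation on the Dirichlet eigenfunctions: for each finite-dimensional projection the computation above is exact, because the noise acts diagonally, and the estimate passes to the limit.

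A cleaner alternative avoids these technicalities altogether. Expanding $y=\sum_k y_k e_k$ in the Dirichlet eigenbasis, each coefficient solves the scalar SDE $\d y_k=(-\tau_k+c)y_k\,\dt+ay_k\,\d W$. Its solution is explicit:
\begin{equation*}
y_k(t)=y_k(0)\exp\Big((-\tau_k+c-\tfrac{a^2}{2})t+aW(t)\Big),
\end{equation*}
which gives $\esp|y_k(t)|^2=\esp|y_k(0)|^2e^{(-2\tau_k+2c+a^2)t}$, using that $y_k(0)$ is $\mathcal F_0$-measurable and hence independent of $W$. Summing over $k$ and using $\tau_k\ge\tau_1$ yields the same bound.
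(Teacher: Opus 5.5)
Your proof is correct and follows essentially the paper's argument. The paper applies It\^o's formula directly to $e^{\mu t}\|y(t)\|_{L^2(\dom)}^2$ with $\mu=2(\tau_1-c)-a^2$, so after Poincar\'e's inequality the drift vanishes and taking expectations gives the bound with $C=1$ without any Gronwall step; you reach the same conclusion via Gronwall (correctly using the inequality for all $0\le s\le t$), your localization handles the martingale term that the paper leaves implicit, and your alternative through the explicit scalar solutions $y_k(t)$ is also valid and shows the rate is attained for $y_0$ proportional to $e_1$.
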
 
\begin{claim}
If $2\tau_1 + a^2 > 2c$, then the solution is almost surely exponentially stable with decay rate $\mu = 2(\tau_1 - c) + a^2$.
\end{claim}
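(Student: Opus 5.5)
Because the noise is scalar and multiplies the whole solution, it commutes with the Laplacian, and the equation can be solved almost explicitly. Let $z$ be the solution of the deterministic problem \eqref{eq:det_parab} with $c_0=c$ and initial datum $y_0$, taken pathwise since $y_0$ is $\mathcal F_0$-measurable. Set
\begin{equation*}
y(t)=\exp\Big(aW(t)-\tfrac{a^2}{2}t\Big)\,z(t).
\end{equation*}
A direct It\^o computation shows that $y$ solves \eqref{eq:spde}: the process $\exp(aW-\tfrac{a^2}{2}t)$ is a geometric Brownian motion with differential $a(\cdot)\,\d W$, and $z$ has finite variation in time, so there is no cross-variation term. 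By uniqueness of weak solutions (\Cref{def:weak_solution}), this is the solution. To be rigorous, I would carry out the computation in the weak formulation against test functions $\varphi\in H^1_0(\dom)$, or alternatively project onto each Dirichlet eigenfunction $e_k$. In the projected form each coefficient $y_k$ solves the scalar linear SDE $\d y_k=(c-\tau_k)y_k\,\dt+ay_k\,\d W$, whose explicit solution is $y_k(t)=y_k(0)\exp\big((c-\tau_k-\tfrac{a^2}{2})t+aW(t)\big)$.

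Squaring and using \eqref{eq:energy} gives, almost surely,
\begin{equation*}
\|y(t)\|^2_{L^2(\dom)}\le \exp\big(2aW(t)-a^2t\big)\,e^{-2(\tau_1-c)t}\|y_0\|^2_{L^2(\dom)}.
\end{equation*}
Take logarithms and divide by $t$:
\begin{equation*}
\frac1t\log\|y(t)\|^2\le \frac{2aW(t)}{t}-\big(2(\tau_1-c)+a^2\big)+\frac{\log\|y_0\|^2}{t}.
\end{equation*}
The last term is handled as follows. On $\{y_0=0\}$ we have $y\equiv0$ and there is nothing to prove. Otherwise $\|y_0\|^2$ is a finite positive random variable, so the term tends to $0$ almost surely.

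The main step is the strong law of large numbers for Brownian motion, $W(t)/t\to0$ almost surely. This follows, for instance, from the law of the iterated logarithm, or from the ordinary SLLN applied to the increments $W(n)-W(n-1)$ together with a maximal inequality (Doob) on $[n,n+1]$ to control the oscillation between integers. With it, taking $\limsup$ yields
\begin{equation*}
\limsup_{t\to\infty}\frac1t\log\|y(t)\|^2\le -\big(2(\tau_1-c)+a^2\big)\quad\text{a.s.},
\end{equation*}
which is \Cref{ases} with $\mu=2(\tau_1-c)+a^2$. This rate is positive exactly when $2\tau_1+a^2>2c$.

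I expect the only delicate point to be justifying the representation formula, i.e. identifying the product with the weak solution. The eigenfunction expansion avoids infinite-dimensional It\^o calculus: each mode satisfies an explicit scalar SDE, and Parseval then gives $\|y(t)\|^2=e^{2aW(t)-a^2t}\sum_k e^{-2(\tau_k-c)t}|y_k(0)|^2\le e^{2aW(t)-a^2t}e^{-2(\tau_1-c)t}\|y_0\|^2$, since $\tau_k\ge\tau_1$ for all $k$. This is the same bound as before, obtained directly.
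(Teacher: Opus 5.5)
Your argument is correct, and it follows a genuinely different route from the paper's.

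\textbf{The paper's route.} The paper never solves the equation. It applies It\^o's formula to $\log S(t)$ with the regularized functional $S(t)=\|y(t)\|^2_{L^2(\dom)}+e^{-\eta t}$; the regularization avoids having to prove that $\|y(t)\|_{L^2(\dom)}$ never vanishes. It then controls the martingale part by the exponential martingale inequality and Borel--Cantelli, at the cost of a factor $(1-\epsilon)$ on the It\^o correction $-2a^2\|y\|^4/S^2$. Next it uses Poincar\'e's inequality and the monotonicity of an auxiliary function $G$ to bound the integrand by its limit at infinity. The hypothesis $2\tau_1+a^2>2c$ enters through the choice $\eta=2\kappa_2-\kappa_1$, which must be positive. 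Finally it lets $\epsilon\to0^+$.

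\textbf{Your route.} You exploit the fact that the noise $y\mapsto ay$ is a scalar multiple of the identity and commutes with $\Delta+c$. Each Fourier coefficient is then an explicit geometric Brownian motion, and $\|y(t)\|^2_{L^2(\dom)}=e^{2aW(t)-a^2t}\|z(t)\|^2_{L^2(\dom)}$ holds exactly. After that, only $W(t)/t\to0$ a.s. is needed.

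\textbf{What each buys.} Your route is shorter and has no $\epsilon$-loss. It needs no nonvanishing property, since you only take the logarithm of an upper bound, and it uses the hypothesis only to make $\mu$ positive. It even shows the rate is sharp: if $(y_0,e_1)_{L^2(\dom)}\neq0$, then $\frac1t\log\|y(t)\|^2_{L^2(\dom)}\to-\mu$ a.s.

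Of your two justifications, the mode-by-mode one is cleaner. It starts from the given weak solution: test with $\phi=e_k$, use $(\nabla y,\nabla e_k)_{L^2(\dom)}=\tau_k(y,e_k)_{L^2(\dom)}$, and invoke uniqueness for scalar linear SDEs. It does not require checking that the product lies in the solution class.

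The price is that everything rests on this commutation structure. With a space-dependent noise coefficient $a(x)$, or a nonlinear drift, no explicit formula is available. The paper's Lyapunov-type argument uses only the energy identity, Poincar\'e's inequality and martingale estimates, and it is the standard template that carries over, with modifications, to such settings.
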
 

In particular, multiplicative noise may either destroy mean-square stability or induce almost sure exponential stabilization, depending on its intensity and on the sign of the drift. The latter phenomenon, often referred to as \emph{stabilization by noise}, has been widely studied in the literature; see, e.g., \cite{Arn79,Has80,Arn90,Mao94,Hau78,CLM01}.

These observations reveal a fundamental feature of the stochastic dynamics: unlike in the deterministic case, different notions of stability are no longer equivalent and may hold under incompatible conditions. Moreover, although mean-square exponential stability implies almost sure exponential stability (see, e.g., \cite[Theorem~4.2]{Mao08}), such results do not improve the stability regime, as they require the same conditions under which mean-square stability holds.

The main objective of this work is to overcome the discrepancy between stability notions by modifying the dynamics. We introduce a feedback control acting on finitely many modes of the solution and establish mean-square exponential stability with a prescribed decay rate. As a consequence, almost sure exponential stability follows within the same framework, thereby identifying a regime in which mean-square and almost sure exponential stability coincide with the same decay rate. This provides a unified description of the long-time behavior, independently of the balance between drift and noise.

\subsection{Main results}

Motivated by the discussion above, we introduce a controlled version of the stochastic heat equation, where the control acts on a subregion of the domain. More precisely, let $\dom_0 \subset \dom$ be a measurable subset with positive Lebesgue measure, that is, $|\dom_0|>0$, and consider the system
\begin{equation}\label{eq:forward_semilinear_intro}
\begin{cases}
\d y = (\Delta y + c y + \chi_{\dom_0} h)\, \dt + a y\, \d W(t) & \text{in } (0,T)\times\dom,\\
y = 0 & \text{on } (0,T)\times \Gamma, \\
y(0) = y_0 & \text{in } \dom,
\end{cases}
\end{equation}
where $h$ is a forcing term that will be chosen as a control function depending on the solution itself. Note that the action of the control $h$ is prescribed in the subregion $\mathcal{D}_0$.

Before specifying the precise form of $h$, we briefly recall the spectral decomposition of the Laplacian with homogeneous Dirichlet boundary conditions. Consider the differential operator $ \Delta : H^2(\dom) \cap H_0^1(\dom) \longrightarrow L^2(\dom)$. Then $-\Delta$ is a positive self-adjoint operator with compact resolvent. As a consequence, there exists an orthonormal basis of $L^2(\dom)$ consisting of eigenfunctions of $-\Delta$. More precisely, there exist sequences $\{e_k\}_{k\in\mathbb{N}^*} \subset H^2(\dom)\cap H_0^1(\dom)$ and $\{\tau_k\}_{k\in\mathbb{N}^*} \subset (0,+\infty)$ such that, for every $k\in\mathbb{N}^*$,
\begin{equation*}
\begin{cases}
-\Delta e_k = \tau_k e_k & \text{in } \dom, \\
e_k = 0 & \text{on } \partial \dom,
\end{cases}
\end{equation*}
and
\begin{equation*}
(e_k,e_\ell)_{L^2(\dom)} = \delta_{k\ell},
\qquad
w = \sum_{k=1}^{+\infty} (w,e_k)_{L^2(\dom)} e_k,
\quad \forall w \in L^2(\dom).
\end{equation*}
Moreover, the eigenvalues satisfy
\begin{equation*}
0 < \tau_1 < \tau_2 \le \tau_3 \le \cdots,
\qquad
\tau_k \to +\infty \quad \text{as } k \to +\infty.
\end{equation*}
Each eigenvalue $\tau_k$ has finite multiplicity. In addition, since $\dom$ is connected, the first eigenvalue $\tau_1$ is simple.

\subsubsection{Stabilization results}

The discussion in the introduction suggests that stability cannot always be guaranteed by the balance between the drift and the noise. We therefore introduce a finite-dimensional feedback acting on the low frequencies of the solution.

More precisely, given a parameter $\lambda>0$, we define
\begin{equation}\label{N_lambda}
N_\lambda := \#\{ i \in \mathbb{N}^* : \tau_i \le \lambda \},
\end{equation}
that is, the number of eigenvalues of $-\Delta$ (counted with multiplicity) that do not exceed $\lambda$.
We also denote by $P_{N_\lambda} w := \sum_{i=1}^{N_\lambda} (w,e_i)_{L^2(\dom)}\, e_i$, $w \in L^2(\dom)$, the orthogonal projection onto the span of the first $N_\lambda$ eigenfunctions.

With these elements at hand, we define the feedback operator
\begin{equation}\label{eq:def_feedback_intro}
\mathscr H_\lambda y := - \gamma_\lambda P_{N_\lambda} y
= - \gamma_\lambda \sum_{i=1}^{N_\lambda} (y,e_i)_{L^2(\dom)}\, e_i,
\end{equation}
where $\gamma_\lambda>0$ is a parameter to be chosen depending on $\lambda$.

This choice allows us to rewrite \eqref{eq:forward_semilinear_intro} as the closed-loop system
\begin{equation}\label{eq:forward_semilinear_feed_intro}
\begin{cases}
\d y = (\Delta y + c y + \chi_{\dom_0} \mathscr H_\lambda y)\, \dt + a y\, \d W(t)
& \text{in } [0,+\infty)\times \dom,\\
y = 0 & \text{on } [0,+\infty)\times \Gamma,\\
y(0) = y_0 & \text{in } \dom.
\end{cases}
\end{equation}

Our first result shows that this feedback law ensures mean-square exponential stability with a prescribed decay rate $\lambda$, as soon as $\lambda$ is chosen sufficiently large.

\begin{theo}[Mean-square stabilization by finite-dimensional feedback]\label{theo:main}
Let $\tau_1$ be the first eigenvalue of the Dirichlet Laplacian and fix $\lambda > \max\{2\tau_1,\, a^2+2c\}$.
Then the closed-loop system \eqref{eq:forward_semilinear_feed_intro} admits a unique weak solution
\begin{equation*}
y \in L^2_{\fil}(\Omega;C([0,\infty);L^2(\dom))) \cap L^2_{\fil}(0,\infty;H_0^1(\dom)),
\end{equation*}
in the sense of \Cref{def:weak_solution} (see \Cref{sec:functional}). Moreover, the solution is exponentially stable in mean square with decay rate $\lambda$. More precisely, there exists a constant $C>0$, depending only on $\dom$, $\dom_0$, $a$, and $c$, such that
\begin{equation}\label{eq:decay_unified}
\esp\!\left(\|y(t)\|^2_{L^2(\dom)}\right)
\le C\, e^{C\sqrt{\lambda}} e^{-\lambda t}
\esp\!\left(\|y_0\|^2_{L^2(\dom)}\right),
\qquad \forall t\ge 0.
\end{equation}

In addition, the corresponding feedback control satisfies
\begin{equation}\label{eq:bound_ctrl_unified}
\esp\!\left(\|\mathscr H_{\lambda}y(t)\|^2_{L^2(\dom)}\right)
\le C\, e^{C\sqrt{\lambda}} e^{-\lambda t}
\esp\!\left(\|y_0\|^2_{L^2(\dom)}\right),
\qquad \forall t\ge 0.
\end{equation}
\end{theo}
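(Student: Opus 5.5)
The plan is to split the solution into its low-frequency part $u:=P_{N_\lambda}y$ and high-frequency part $v:=(I-P_{N_\lambda})y$. The key structural point is that $u$ obeys a \emph{closed} finite-dimensional stochastic system. The control term is $-\gamma_\lambda\chi_{\dom_0}u$, and applying $P_{N_\lambda}$ to the equation gives
\[
\d u=\big(\Delta u+cu-\gamma_\lambda P_{N_\lambda}(\chi_{\dom_0}u)\big)\dt+au\,\d W ,
\]
\[
\d v=\big(\Delta v+cv-\gamma_\lambda (I-P_{N_\lambda})(\chi_{\dom_0}u)\big)\dt+av\,\d W .
\]
So $u$ does not depend on $v$, and $v$ is driven by $u$ only through a forcing term.

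\textbf{Well-posedness.} The feedback $y\mapsto\chi_{\dom_0}\mathscr H_\lambda y$ is a bounded linear operator on $L^2(\dom)$. Existence and uniqueness of a weak solution in the sense of \Cref{def:weak_solution}, on every $[0,T]$, therefore follow from the standard theory for linear SPDEs with bounded perturbations (Galerkin approximation, or a fixed point argument in $L^2_{\fil}(\Omega;C([0,T];L^2(\dom)))\cap L^2_{\fil}(0,T;H^1_0(\dom))$). The global-in-time integrability $L^2_{\fil}(0,\infty;H^1_0)$ will come out of the energy estimates below.

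\textbf{Low modes.} Apply It\^o's formula to $\|u\|^2$ and take expectations. Using $(P_{N_\lambda}(\chi_{\dom_0}u),u)=\|u\|^2_{L^2(\dom_0)}$ and $\|\nabla u\|^2\ge\tau_1\|u\|^2$, we get
\[
\tfrac{\d}{\dt}\esp\|u\|^2\le (2c+a^2)\esp\|u\|^2-2\gamma_\lambda\esp\|u\|_{L^2(\dom_0)}^2 .
\]
The key input is the Lebeau--Robbiano spectral inequality on measurable sets of positive measure (Apraiz--Escauriaza--Wang--Zhang): for every $w\in\mathrm{span}\{e_1,\dots,e_{N_\lambda}\}$,
\[
\|w\|_{L^2(\dom)}\le Ce^{C\sqrt\lambda}\|w\|_{L^2(\dom_0)} .
\]
Choose $\gamma_\lambda:=C'e^{C'\sqrt\lambda}$ large enough that $2\gamma_\lambda C^{-2}e^{-2C\sqrt\lambda}\ge 2c+a^2+2\lambda$ (recall $\lambda>2\tau_1$, so $\lambda$ is bounded below and polynomial factors are absorbed). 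Then $\esp\|u(t)\|^2\le e^{-2\lambda t}\esp\|y_0\|^2$. The control bound \eqref{eq:bound_ctrl_unified} follows at once, since $\|\mathscr H_\lambda y\|=\gamma_\lambda\|u\|$ and $\gamma_\lambda^2\le Ce^{C\sqrt\lambda}$.

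\textbf{High modes.} Apply It\^o's formula to $e^{\lambda t}\|v\|^2$. Use $\|\nabla v\|^2\ge\tau_{N_\lambda+1}\|v\|^2>\lambda\|v\|^2$ and Young's inequality $2\gamma_\lambda|(\chi_{\dom_0}u,v)|\le\delta\|v\|^2+\delta^{-1}\gamma_\lambda^2\|u\|^2$. This gives
\[
\tfrac{\d}{\dt}\big(e^{\lambda t}\esp\|v\|^2\big)\le\big(\lambda-2\tau_{N_\lambda+1}+2c+a^2+\delta\big)e^{\lambda t}\esp\|v\|^2+\delta^{-1}\gamma_\lambda^2e^{-\lambda t}\esp\|y_0\|^2 .
\]
The coefficient in front of $e^{\lambda t}\esp\|v\|^2$ is $\le -(\lambda-a^2-2c)+\delta$, which is negative for small $\delta$ by the hypothesis $\lambda>a^2+2c$. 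Gronwall then yields $\esp\|v(t)\|^2\le (1+\delta^{-1}\gamma_\lambda^2/\lambda)e^{-\lambda t}\esp\|y_0\|^2$. Adding the $u$ and $v$ bounds gives \eqref{eq:decay_unified}. Integrating the dissipation terms also gives the $L^2_{\fil}(0,\infty;H^1_0)$ bound.

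\textbf{Main obstacle.} The delicate step is making the constant depend only on $\dom,\dom_0,a,c$ and not on the gap $\lambda-(a^2+2c)$, which can be arbitrarily small. The choice $\delta\sim\lambda-a^2-2c$ would put that gap into the constant. To avoid this, I would use the dissipation more carefully. For the high modes $2\tau_{N_\lambda+1}-\lambda>\lambda>2\tau_1$, so the real margin is $2\tau_{N_\lambda+1}-\lambda-(2c+a^2)$. I would first try to bound this below by a constant times $\lambda$ when $\lambda$ is large. When $\lambda$ lies in a bounded range, I would treat it by increasing $\gamma_\lambda$ so that the low-mode decay is much faster, at rate $M\lambda$ with $M$ large, and then use the convolution estimate rather than Young's inequality with a small $\delta$. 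If this fails, I would accept a constant that also depends on a lower bound for $\lambda-a^2-2c$. This is the point I would check first.
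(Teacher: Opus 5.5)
Your architecture matches the paper's: a closed finite-dimensional system for $u=P_{N_\lambda}y$ handled by the spectral inequality, then a forced equation for $v$. Your low-mode estimate and the control bound are correct. The gap is the one you flagged, and as written the proposal does not prove the statement. Young's inequality forces $\delta<\lambda-a^2-2c$, so your constant $1+\delta^{-1}\gamma_\lambda^2/\lambda$ blows up as $\lambda\downarrow a^2+2c$. The hypothesis allows this whenever $a^2+2c>2\tau_1$, and then no bound $Ce^{C\sqrt{\lambda}}$ with $C=C(\dom,\dom_0,a,c)$ follows; your fallback proves a weaker theorem.

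The paper's own proof does not offer a way around this. It uses the weight $e^{\beta t}$ with $\beta=\lambda+\tau_1$ and Young parameter $\epsilon=\beta-a^2-2c>\tau_1$. That is harmless only because its high-frequency lemma records the drift coefficient of $\|P_{N_\lambda}^{\perp}y\|^2$ as $a^2+2c-\beta$. It\^o's formula gives at best $\beta+a^2+2c-2\tau_{N_\lambda+1}$, and the recorded value bounds this only if $\tau_{N_\lambda+1}\ge\lambda+\tau_1$. The definition of $N_\lambda$ gives only $\tau_{N_\lambda+1}>\lambda$. For $y_0=e_{N_\lambda+1}$ one has $P_{N_\lambda}y\equiv 0$ and $\mathbb{E}\|y(t)\|^2=e^{-(2\tau_{N_\lambda+1}-2c-a^2)t}$. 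This contradicts the paper's intermediate bound $\mathbb{E}(e^{(\lambda+\tau_1)t}\|y(t)\|^2)\le\mu_\lambda\mathbb{E}\|y_0\|^2$, e.g.\ for $\dom=(0,\pi)$, $a^2+2c=8.9$, $\lambda=8.95$, $\tau_{N_\lambda+1}=9$. So your distrust of a one-shot Young argument is justified.

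The convolution idea you mention is the right fix, and it works with what you already have. There is no need to enlarge $\gamma_\lambda$ or split $\lambda$ into ranges. Run Gr\"onwall on $\sqrt{Z}$, where $Z(t):=\mathbb{E}\|v(t)\|^2$, instead of on $Z$ after Young. Your low-mode step gives $U(t):=\mathbb{E}\|u(t)\|^2\le e^{-2\lambda t}\mathbb{E}\|y_0\|^2$. Cauchy--Schwarz in $\Omega$ and $\|\nabla v\|^2\ge\tau_{N_\lambda+1}\|v\|^2$ then give
\[
Z'\le-\big(2\tau_{N_\lambda+1}-2c-a^2\big)Z+2\gamma_\lambda\sqrt{Z}\sqrt{U}\le-\lambda Z+2\gamma_\lambda\sqrt{Z}\sqrt{U},
\]
because $2\tau_{N_\lambda+1}-2c-a^2>2\lambda-\lambda=\lambda$. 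On the high side you need no strict margin at all. Hence, regularizing with $\sqrt{Z+\eta}$ near $Z=0$,
\[
\frac{\mathrm{d}}{\mathrm{d}t}\Big(e^{\lambda t/2}\sqrt{Z(t)}\Big)\le\gamma_\lambda e^{-\lambda t/2}\big(\mathbb{E}\|y_0\|^2\big)^{1/2},
\qquad\text{so}\qquad
Z(t)\le\Big(1+\frac{2\gamma_\lambda}{\lambda}\Big)^2e^{-\lambda t}\,\mathbb{E}\|y_0\|^2 .
\]
Since $\gamma_\lambda\le Ce^{C\sqrt{\lambda}}$ and $\lambda>2\tau_1$, adding the bound on $U$ gives the stated decay with a gap-free constant. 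The margin that matters is between the low-mode rate $2\lambda$ and the target $\lambda$, which is of size $\lambda$. The margin between $2\tau_{N_\lambda+1}-2c-a^2$ and $\lambda$ does not matter. With this replacement your decay and control estimates are complete.
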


The proof is based on a decomposition of the solution into low and high frequencies with respect to the Dirichlet Laplacian. The low-frequency component is stabilized through the feedback term, while a spectral inequality on measurable sets (see \Cref{prop:spectral} below) is used to transfer localized information on the low-frequency components to global estimates. The high-frequency component is controlled by the dissipation of the equation. Suitable weighted energy estimates are derived for both parts and then combined through an appropriate choice of the weight and of the feedback gain $\gamma_\lambda$.

This argument extends to the stochastic setting the stabilization strategy developed in \cite{Xia24}\footnote{This approach, based on finite-dimensional feedback acting on low frequencies, has recently attracted attention and has been extended and adapted in several works, often with different objectives; see, e.g., \cite{Xia23,LWXZZ24,CX25,CGP25,AHSM26}.} in the deterministic case, where a finite-dimensional feedback acting on the low frequencies is introduced.  In the stochastic literature, related results were obtained in \cite{BLT02,Bar13}, where the feedback depends on the full solution, localized in space. In both cases, however, restrictive geometric assumptions are imposed on the control region, requiring in particular that $\dom\setminus\dom_0$ be sufficiently large. Moreover, these approaches provide exponential stabilization under suitable conditions, but do not yield a quantitative control of the decay rate.

By contrast, \Cref{theo:main} relies on a feedback acting on finitely many modes of the solution and localized on an arbitrary measurable subset $\dom_0$, and yields mean-square exponential stabilization with a prescribed decay rate, thereby improving upon the results available in the literature.

The mean-square stabilization result in \Cref{theo:main} provides exponential decay with rate $\lambda$. As discussed in the introduction, mean-square exponential stability and almost sure exponential stability are not equivalent in general. The next result shows that the same feedback law also ensures almost sure exponential decay, without modifying the parameters of the system.

\begin{theo}\label{thm:as_intro}
Under the assumptions of Theorem~\ref{theo:main}, the solution to the closed-loop system \eqref{eq:forward_semilinear_feed_intro} is almost surely exponentially stable with decay rate $\lambda$. More precisely,
\begin{equation*}
\limsup_{t\to\infty} \frac{1}{t}\log \|y(t)\|^2_{L^2(\dom)} \leq -\lambda
\quad \text{a.s.}
\end{equation*}
\end{theo}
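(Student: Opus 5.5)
The plan is to deduce almost sure decay from the mean-square estimate \eqref{eq:decay_unified} of \Cref{theo:main} with a Borel--Cantelli argument on unit time intervals, in the spirit of \cite[Theorem~4.2]{Mao08}. The only extra ingredient needed is a supremum-in-time estimate on each interval $[n,n+1]$. One cannot expect decay strictly faster than $\lambda$ from the mean-square bound alone. So the goal is to show that for every $\varepsilon>0$, almost surely $\|y(t)\|^2_{L^2(\dom)}\le e^{-(\lambda-\varepsilon)t}$ for all $t$ large enough. Letting $\varepsilon\downarrow0$ along a countable sequence then gives $\limsup_{t\to\infty}\frac1t\log\|y(t)\|^2_{L^2(\dom)}\le-\lambda$ a.s.

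\textbf{Step 1: supremum estimate on unit intervals.} Apply It\^o's formula to $\|y(t)\|^2_{L^2(\dom)}$ on $[n,t]$, $t\in[n,n+1]$:
\begin{equation*}
\|y(t)\|^2 = \|y(n)\|^2 + \int_n^t \Big(-2\|\nabla y\|^2 + (2c+a^2)\|y\|^2 + 2(\chi_{\dom_0}\mathscr H_\lambda y, y)\Big)\,\d s + 2a\int_n^t \|y\|^2\,\d W(s).
\end{equation*}
Use $|(\chi_{\dom_0}\mathscr H_\lambda y,y)|\le \gamma_\lambda\|y\|^2$. Drop the negative gradient term. Then take the supremum over $[n,n+1]$ and expectations, and treat the martingale term with the Burkholder--Davis--Gundy inequality:
\begin{equation*}
\esp\sup_{[n,n+1]}\Big|\int_n^t \|y\|^2\,\d W\Big| \le C\,\esp\Big(\int_n^{n+1}\|y\|^4\Big)^{1/2}\le \tfrac{1}{4|a|}\esp\sup_{[n,n+1]}\|y\|^2 + C\int_n^{n+1}\esp\|y\|^2\,\d s .
\end{equation*}
Absorb the first term into the left-hand side; this is legitimate because $y\in L^2_{\fil}(\Omega;C([0,\infty);L^2(\dom)))$, so the quantity absorbed is finite. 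Inserting \eqref{eq:decay_unified} for $\esp\|y(n)\|^2$ and for the time integral gives
\begin{equation*}
\esp\Big(\sup_{t\in[n,n+1]}\|y(t)\|^2_{L^2(\dom)}\Big)\le C_\lambda\, e^{-\lambda n}\,\esp\|y_0\|^2_{L^2(\dom)},
\end{equation*}
where $C_\lambda$ depends on $\lambda,\gamma_\lambda,a,c,\dom,\dom_0$ but not on $n$. That independence from $n$ is all that matters here.

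\textbf{Step 2: Borel--Cantelli.} Fix $\varepsilon\in(0,\lambda)$. By Markov's inequality,
\begin{equation*}
\mathbb P\Big(\sup_{[n,n+1]}\|y\|^2> e^{-(\lambda-\varepsilon)n}\Big)\le C_\lambda e^{-\varepsilon n}\esp\|y_0\|^2.
\end{equation*}
This is summable in $n$. Hence almost surely there is $n_0(\omega)$ such that for $n\ge n_0$ and $t\in[n,n+1]$,
\begin{equation*}
\tfrac1t\log\|y(t)\|^2\le -(\lambda-\varepsilon)\tfrac{n}{t}\le -(\lambda-\varepsilon)\tfrac{n}{n+1}.
\end{equation*}
Taking the limsup gives $-(\lambda-\varepsilon)$. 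Intersecting these full-measure events over $\varepsilon=1/k$ yields the claim.

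\textbf{Main obstacle.} The delicate step is Step 1: making the BDG absorption rigorous. This requires the a priori finiteness of $\esp\sup\|y\|^2$ from the well-posedness class (or a stopping-time localization followed by monotone convergence). One must also check that the It\^o formula holds for weak solutions in the variational (Gelfand triple $H_0^1\subset L^2\subset H^{-1}$) setting of \Cref{def:weak_solution}. Everything else is a direct consequence of \Cref{theo:main}.
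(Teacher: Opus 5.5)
Your proposal is correct and follows essentially the same route as the paper. Both arguments use It\^o's formula with a BDG absorption on unit intervals (the paper's \Cref{lem:BDG_sup} and \Cref{prop:tgeq0}), feed in the mean-square decay of \Cref{theo:main}, and conclude with Markov and Borel--Cantelli; the differences are only cosmetic: you apply Markov directly to $\sup_{[n,n+1]}\|y\|^2$ rather than splitting into $Q_1+Q_2+Q_3$, and you use thresholds $e^{-(\lambda-\varepsilon)n}$ with $\varepsilon=1/k\downarrow 0$, whereas the paper takes $\eta_N^2\propto N^2e^{-\lambda N}$ to obtain the rate $\lambda$ in a single pass.
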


The proof combines uniform-in-time estimates with moment bounds for the stochastic integral and a Borel--Cantelli type argument applied on a sequence of time intervals. In particular, the same decay rate $\lambda$ is preserved at the almost sure level.

This strategy is based on classical arguments in the theory of stochastic evolution equations, which can be traced back to \cite{Hau78,Ich82} and have been further developed in many works; see in particular \cite{Car90,Car03,Car02}. Here, we adapt this approach to the present feedback setting, exploiting the estimates in a way that preserves the decay rate $\lambda$.

In this way, the feedback design provides a unified stabilization result, ensuring both mean-square and almost sure exponential decay under the same conditions.

\subsubsection{Null-controllability by feedback}

We conclude this section by showing that the stabilization methodology developed above also leads to a null-controllability result for the stochastic heat equation.

Given a time horizon $T>0$, we consider the controlled system
\begin{equation}\label{eq:stoch-heat-control}
\begin{cases}
\d y = (\Delta y + c y + \chi_{\dom_0} h)\, \dt + a y\, \d W(t)
& \text{in } (0,T)\times \dom,\\
y = 0 & \text{on } (0,T)\times \Gamma,\\
y(0) = y_0 & \text{in } \dom,
\end{cases}
\end{equation}
and we say that the system is null-controllable at time $T$ if for every initial datum $y_0 \in L^2_{\mathcal F_0}(\Omega;L^2(\dom))$ there exists a control $h \in L^2_{\fil}(0,T;L^2(\dom_0)),$ such that the corresponding solution satisfies $y(T)=0 \quad \text{a.s.}$.

The null-controllability of stochastic parabolic equations is by now well understood in the linear setting, see, for instance, \cite{BRT03,TZ09,Lu11,Liu14}, and more recently \cite{HSLBP20a,HSLBP23} for extensions to nonlinear problems. A standard approach is based on the Lebeau--Robbiano strategy, which combines spectral inequalities with an iterative procedure on short time intervals in order to drive the solution to zero. In the stochastic setting, this strategy was implemented in \cite{Lu11}, yielding null-controllability for \eqref{eq:forward_semilinear_feed_intro}. The proof relies on an observability inequality for the adjoint equation and adapts the classical iteration to construct controls in $L^2$ that are adapted to the filtration.

Our result reads as follows.

\begin{theo}[Null-controllability by feedback]\label{thm:null_feed}
For every $T>0$ and every 
$y_0 \in L^2_{\mathcal F_0}(\Omega;L^2(\dom))$, 
there exists an $\fil$-adapted control $h \in L^2_{\fil}(0,T;L^2(\dom_0)),$
constructed from a family of finite-dimensional feedback laws, such that the corresponding solution to \eqref{eq:stoch-heat-control} satisfies
\begin{equation*}
y(T)=0 \quad \text{in } L^2(\dom), \ \text{a.s.}
\end{equation*}
\end{theo}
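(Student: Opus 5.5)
We follow the Lebeau--Robbiano-type iteration of \cite{Xia24}, with observability replaced by the rapid stabilization of \Cref{theo:main}. Split $[0,T)$ into intervals $[t_k,t_{k+1})$ with lengths $T_k := t_{k+1}-t_k$ and $\sum_k T_k = T/2$, say $T_k = \frac{T}{2}\cdot\frac{6}{\pi^2}\frac{1}{(k+1)^2}$ or a geometric choice $T_k\sim 2^{-k}$. Choose rates $\lambda_k\to\infty$. On $[t_k,t_{k+1})$ apply the feedback $h=\mathscr H_{\lambda_k} y$, which is $\fil$-adapted. By \Cref{theo:main}, which we may restart at time $t_k$ with $\mathcal F_{t_k}$-measurable initial datum because the equation has time-independent coefficients and Brownian increments are independent of $\mathcal F_{t_k}$, we obtain
\[
\esp\|y(t_{k+1})\|^2 \le C e^{C\sqrt{\lambda_k}-\lambda_k T_k}\,\esp\|y(t_k)\|^2,
\qquad
\int_{t_k}^{t_{k+1}}\esp\|\mathscr H_{\lambda_k}y\|^2\,\dt \le C e^{C\sqrt{\lambda_k}}\,\esp\|y(t_k)\|^2 .
\]
Take $\lambda_k = M 4^k$ and $T_k = c_T 2^{-k}$. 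Then $\lambda_k T_k = M c_T 2^k$ dominates both $C\sqrt{\lambda_k} = C\sqrt M\, 2^k$ and the $\log C$ factor once $M$ is large. Iterating, $\esp\|y(t_k)\|^2 \le \exp\bigl(-\sum_{j<k}(\lambda_j T_j - C\sqrt{\lambda_j} - \log C)\bigr)\esp\|y_0\|^2$, which is of order $e^{-\alpha 2^k}$ for some $\alpha>0$. This super-exponential decay beats the control cost $e^{C\sqrt{\lambda_k}} = e^{C\sqrt M 2^k}$ provided $\alpha > C\sqrt M$. Choosing $M$ large and the sizes carefully (for instance $T_k$ a fixed fraction, with $\lambda_k T_k \ge 4C\sqrt{\lambda_k}$), we get $\sum_k \esp\int_{t_k}^{t_{k+1}}\|h\|^2\,\dt<\infty$. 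Hence $h\in L^2_{\fil}(0,T^*;L^2(\dom_0))$, where $T^*=\sum T_k\le T$. After $T^*$ we set $h=0$, and since the solution vanishes it stays zero.

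The remaining steps are as follows. First, the concatenated solution is the weak solution of \eqref{eq:stoch-heat-control} on $[0,T^*)$ with this $h$; well-posedness on each piece comes from \Cref{theo:main}, and gluing uses continuity of paths. Second, $y(t)\to 0$ as $t\to T^*$. Since $\esp\|y(t_k)\|^2\to0$, the energy estimate on each interval gives $\esp\sup_{[t_k,t_{k+1}]}\|y\|^2\lesssim e^{C\sqrt{\lambda_k}}\esp\|y(t_k)\|^2\to0$. This controls the supremum over the interval, not just the endpoint values, and it uses the bounded feedback norm $\gamma_{\lambda_k}\lesssim e^{C\sqrt{\lambda_k}}$, implicit in \eqref{eq:bound_ctrl_unified}. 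Consequently $y\in L^2_{\fil}(\Omega;C([0,T^*];L^2))$ with $y(T^*)=0$ a.s. Finally, since the system is linear and $y\equiv 0$ is the solution with zero forcing, extending by zero up to $T$ gives $y(T)=0$ a.s.

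The main obstacle is the bookkeeping of constants. The cost $e^{C\sqrt{\lambda}}$ of \eqref{eq:decay_unified} must be beaten by $e^{-\lambda T_k}$ while $\sum T_k<\infty$, which forces $\lambda_k T_k \gg \sqrt{\lambda_k}$, i.e. $\lambda_k \gg T_k^{-2}$. This is exactly the classical Lebeau--Robbiano balance, and the square-root exponent is what makes it work. A secondary delicate point is justifying the restart of \Cref{theo:main} at random-free but filtration-dependent initial times with the same constant $C$. This needs $C$ independent of $\lambda$ (true by the statement) and the Markov/time-homogeneity property of the linear SPDE.
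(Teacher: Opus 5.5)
Your proposal is correct and follows essentially the paper's route: a feedback Lebeau--Robbiano iteration that applies \Cref{theo:main} on successive shrinking intervals, with rates $\lambda_k\to\infty$ chosen so that $\sqrt{\lambda_k}$ times the interval length is a large constant. The differences are minor. The paper takes $T_n=T-1/n$ and $\lambda_n=\Gamma^2n^4$, so that once $\Gamma$ is large the accumulated decay $e^{-cn^3}$ automatically dominates the cost $e^{C\Gamma n^2}$, and it reaches $y(T)=0$ via path continuity and Fatou's lemma. You instead use geometric steps, which need the constant comparison $\alpha>C\sqrt{M}$ you identified, stop at $T/2$, and pass to the endpoint through the sup-in-time bound of \Cref{prop:tgeq0}; that last step works because the double-exponential decay makes those bounds summable in $k$.
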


The above result provides an alternative route to controllability based on the stabilization method developed in this work. More precisely, by combining the fast decay in \Cref{theo:main} with an iterative procedure on shrinking time intervals, we construct controls directly in feedback form. This can be interpreted as a feedback version of the Lebeau--Robbiano strategy. The proof is inspired by \cite[Theorem 3.1]{Xia24}, which we adapt to the present stochastic setting.

A key feature of this approach is that the control is constructed directly, without passing through the adjoint equation. In the stochastic setting, this adjoint corresponds to a backward SPDE, and the derivation of observability inequalities for such equations is a delicate issue; see \cite[Section 5]{Bar13}. The present construction bypasses this difficulty and yields adapted controls by construction.

More generally, this suggests that whenever a suitable spectral inequality is available and a corresponding stabilization result can be established, one can construct adapted controls through an iterative feedback procedure. This is particularly relevant for systems where the adjoint equation is not well understood or difficult to analyze, such as fluid models (e.g., the stochastic Stokes system), for which spectral inequalities are available (see \cite{CSL16,CSFSS25}). In this sense, feedback-based strategies provide a viable alternative to classical approaches.

\subsection{Organization of the paper}

The paper is organized as follows. In Section~\ref{sec:functional}, we introduce the functional framework and recall well-posedness results for the stochastic heat equation. In Section~\ref{sec:mean_s}, we establish the mean-square stabilization result. Section~\ref{sec:as_s} is devoted to the proof of almost sure exponential stability. Finally, in Section~\ref{sec:nc}, we prove the null-controllability result based on the feedback construction.

\section{Functional setting, well-posedness and spectral tools}\label{sec:functional}

In this section, we make precise the functional framework for the stochastic systems introduced in \Cref{sec:intro} and recall the corresponding well-posedness results. We also present a spectral inequality that will play a central role in the analysis.

Let $T>0$ be fixed and let $\dom\subset\mathbb R^d$ be the domain introduced in the introduction. We denote by $Q_T=(0,T)\times\dom$ and $\Sigma_T=(0,T)\times\partial\dom$.

In what follows, we denote $\{\F_t\}_{t\geq 0}$ by $\fil$. Let $(X,\|\cdot\|_X)$ be a Banach space. We denote by $L^2_{\mathcal F_0}(\Omega;X)$ the space of all $\mathcal F_0$-measurable $X$-valued random variables $\xi$ such that $\mathbb E(\|\xi\|_X^2)<+\infty$. We also denote by $L^2_{\fil}(0,T;X)$ the space of all $X$-valued $\fil$-adapted processes $\psi$ such that $\mathbb E\!\left(\int_0^T\|\psi(t)\|_X^2\,dt\right)<+\infty$, and by $L^2_{\fil}(\Omega;C([0,T];X))$ the space of all continuous $\fil$-adapted processes $\psi$ such that $\mathbb E\!\left(\|\psi\|_{C([0,T];X)}^2\right)<+\infty$.

We consider stochastic parabolic equations of the form
\begin{equation}\label{eq:general_forward}
\begin{cases}
\d y = \left(\Delta y + cy + F(y)\right)\dt + ay\,\d W(t) & \textnormal{in } Q_T, \\
y = 0 & \textnormal{on } \Sigma_T, \\
y(0) = y_0 & \textnormal{in } \dom,
\end{cases}
\end{equation}
where $c,a\in\mathbb R$ with $a\neq 0$, and $y_0 \in L^2_{\mathcal F_0}(\Omega;L^2(\dom))$, $F:L^2(\dom)\to L^2(\dom)$ is a (possibly nonlinear) mapping.

\begin{defi}\label{def:weak_solution}
A process $y$ is called a weak solution to \eqref{eq:general_forward} if
\begin{enumerate}
\item[1)] $y$ is $L^2(\dom)$-valued and $\fil $-adapted,
\item[2)] $y\in L^2_{\fil}(\Omega;C([0,T];L^2(\dom)))\cap L^2_{\fil}(0,T;H_0^1(\dom))$,
\item[3)] for any $t\in[0,T]$ and $\phi\in H_0^1(\dom)$, it holds
\begin{align*}
(y(t),\phi)_{L^2(\dom)} &= (y_0,\phi)_{L^2(\dom)}
- \int_0^t (\nabla y,\nabla \phi)_{L^2(\dom)}\,ds \\
&\quad + \int_0^t (cy+F(y),\phi)_{L^2(\dom)}\,ds
+ \int_0^t (ay,\phi)_{L^2(\dom)}\,dW(s), \quad \textnormal{a.s.}
\end{align*}
\end{enumerate}
\end{defi}

We assume that $F$ is globally Lipschitz, that is, there exists $L>0$ such that $\|F(y_1)-F(y_2)\|_{L^2(\dom)} \leq L \|y_1-y_2\|_{L^2(\dom)}$ for all $y_1,y_2\in L^2(\dom)$.

\begin{theo}\label{thm:general_wellposed}
For any $y_0\in L^2_{\mathcal F_0}(\Omega;L^2(\dom))$, system \eqref{eq:general_forward} admits a unique weak solution in the sense of Definition \ref{def:weak_solution}.
\end{theo}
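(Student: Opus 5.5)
We prove \Cref{thm:general_wellposed} by a fixed-point argument built on the linear theory. First, for a given $f\in L^2_{\fil}(0,T;L^2(\dom))$, we solve the linear problem
\begin{equation*}
\d y = (\Delta y + cy + f)\,\dt + ay\,\d W(t),\qquad y|_{\Gamma}=0,\qquad y(0)=y_0 .
\end{equation*}
We construct its solution by a Galerkin scheme in the eigenbasis $\{e_k\}$. On $\mathrm{span}\{e_1,\dots,e_n\}$ the projected system is a finite family of linear scalar SDEs. For the coefficients $y_k=(y,e_k)$ these read
\begin{equation*}
\d y_k=(-\tau_k y_k+cy_k+f_k)\,\dt+ay_k\,\d W .
\end{equation*}
Each has a unique adapted continuous solution. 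Applying Itô's formula to $\|y^n\|^2_{L^2(\dom)}$ gives
\begin{equation*}
\d\|y^n\|^2 = \big(-2\|\nabla y^n\|^2+(2c+a^2)\|y^n\|^2+2(f,y^n)\big)\dt+2a\|y^n\|^2\d W .
\end{equation*}
Taking expectations and using Gronwall yields bounds in $L^\infty(0,T;L^2(\Omega;L^2(\dom)))\cap L^2_{\fil}(0,T;H_0^1(\dom))$. The Burkholder--Davis--Gundy inequality (with $\varepsilon$-absorption of $\esp\sup_t\|y^n\|^2$) upgrades this to a bound in $L^2_{\fil}(\Omega;C([0,T];L^2(\dom)))$. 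All bounds are uniform in $n$.

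Because the problem is linear, we can avoid weak compactness. Applying the same energy identity to the difference $y^n-y^m$ shows it is controlled by $\esp\|(P_n-P_m)y_0\|^2$ and $\esp\int_0^T\|(P_n-P_m)f\|^2$. Both tend to $0$ by dominated convergence. Hence $(y^n)$ is Cauchy in $L^2_{\fil}(\Omega;C([0,T];L^2(\dom)))\cap L^2_{\fil}(0,T;H_0^1(\dom))$. Its limit $y$ is adapted and continuous, and it satisfies the weak formulation of \Cref{def:weak_solution} for every $\phi=e_k$. By density of $\mathrm{span}\{e_k\}$ in $H_0^1(\dom)$, it satisfies the formulation for every $\phi\in H_0^1(\dom)$; the stochastic integral passes to the limit by the Itô isometry.

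The next step is uniqueness together with an energy estimate for arbitrary weak solutions. We need Itô's formula for $\|y\|^2$ with $y$ only a weak solution. We take it from the standard Itô formula in the Gelfand triple $H_0^1(\dom)\subset L^2(\dom)\subset H^{-1}(\dom)$ (Krylov--Rozovskii/Pardoux). Alternatively, we apply the scalar Itô formula to each $(y,e_k)^2$, sum over $k\le n$, and let $n\to\infty$. Uniqueness follows by applying the estimate to the difference of two solutions with $f=0$ and $y_0=0$. The outcome is the estimate
\begin{equation*}
\esp\sup_{t\le T}\|y(t)\|^2\le C_T\Big(\esp\|y_0\|^2+\esp\int_0^T\|f\|^2\Big).
\end{equation*}
We expect this Itô formula for weak solutions to be the main technical point. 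The Galerkin truncation route is the one we would try first, since it uses only the eigenbasis already introduced.

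Finally, we treat $F$ by a fixed point. Define $\Phi(z)=y$, where $y$ solves the linear problem with $f=F(z)$. This maps the space $L^2_{\fil}(\Omega;C([0,T];L^2(\dom)))$ into itself, because $\|F(z)\|\le\|F(0)\|+L\|z\|$. The linear estimate on differences, using the weighted norm $\esp\sup_t e^{-\beta t}\|\cdot\|^2$ with $\beta$ large, gives
\begin{equation*}
\|\Phi(z_1)-\Phi(z_2)\|\le \tfrac12\|z_1-z_2\| .
\end{equation*}
One could instead work on short intervals $[0,T_0]$ with $C_{T_0}L^2T_0<1$ and iterate finitely many times up to $T$. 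The Banach fixed point theorem then gives existence. Uniqueness follows from the same contraction estimate, or from Gronwall applied to the difference of two solutions.
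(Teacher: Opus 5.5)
Your proposal is correct and follows the route the paper has in mind: the paper omits the proof and says only that it follows from standard Galerkin approximations, energy estimates and a Banach fixed point procedure, which is what you carry out (Galerkin and BDG for the linear problem with given forcing, then a contraction for $z\mapsto$ the solution with forcing $F(z)$). The step you flag as the main technical point is easy here: testing the weak formulation with $\phi=e_k$ shows that the Fourier coefficients of any weak solution satisfy the decoupled scalar SDEs $\textnormal{d}y_k=(-\tau_k y_k+cy_k+f_k)\,\textnormal{d}t+ay_k\,\textnormal{d}W(t)$, which gives uniqueness for the linear problem directly, without an It\^o formula for weak solutions.
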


The proof follows from standard Galerkin approximations combined with energy estimates and a Banach fixed point procedure (see, e.g., \cite{Cho15}). For brevity, we omit the details.

As a particular case, we consider the controlled system
\begin{equation}\label{eq:forward_gen}
\begin{cases}
\d{y}=\left(\Delta y + cy +\chi_{\dom_0}\left[\sum_{i=1}^N\ell_i(y)\varphi_i\right]\right)\dt+ay\d W(t) &\text{in }Q_T, \\
y=0 &\text{on }\Sigma_T, \\
y(0)=y_0 &\text{in }\dom,
\end{cases}
\end{equation}
where $N\in\mathbb N$, $\varphi_i\in L^2(\dom)$, $\ell_i:L^2(\dom)\to\mathbb R$ are bounded linear operators, and $\dom_0\subset\dom$ is a measurable subset with positive Lebesgue measure $|\dom_0|>0$. This corresponds to the choice $F(y)=\chi_{\dom_0}\sum_{i=1}^N \ell_i(y)\varphi_i$.

Since the operators $\ell_i$ are bounded and $\varphi_i\in L^2(\dom)$, the mapping $F$ is globally Lipschitz in $L^2(\dom)$. Therefore, by Theorem \ref{thm:general_wellposed}, system \eqref{eq:forward_gen} admits a unique weak solution for every $y_0\in L^2_{\mathcal F_0}(\Omega;L^2(\dom))$.

We now recall a spectral inequality that will be used throughout the analysis. Let $\{e_k\}_{k\ge 1}$ be the eigenfunctions of the Dirichlet Laplacian introduced in \Cref{sec:intro}, and let $\{\tau_k\}_{k\ge 1}$ be the corresponding eigenvalues. For $\lambda>0$, we denote by $N_\lambda$ the number of eigenvalues satisfying $\tau_k\le \lambda$ (recall the definition in \eqref{N_lambda}).

\begin{prop}\label{prop:spectral}
Let $\dom_0 \subset \dom$ be a measurable subset with positive Lebesgue measure. Then there exists a constant $C\geq 1$, depending only on $\dom$ and $\dom_0$, such that for every $\lambda>0$ and every family $\{a_i\}_{i=1}^{N_\lambda}\subset\mathbb R$, it holds
\begin{equation}\label{eq:spectral_ineq}
\left\|\sum_{i=1}^{N_\lambda} a_i e_i\right\|_{L^2(\dom_0)}^2
\ge C^{-1} e^{-C\sqrt{\lambda}} \sum_{i=1}^{N_\lambda} a_i^2.
\end{equation}
\end{prop}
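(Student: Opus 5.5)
This spectral inequality on measurable sets of positive measure is the Lebeau--Robbiano type inequality extended from open sets to measurable sets, due to Apraiz--Escauriaza--Wang--Zhang. The natural plan is to reduce to an interpolation (propagation of smallness) estimate for an augmented elliptic problem and then use a Nadirashvili/Vessella-type estimate for real-analytic functions on measurable sets.

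Given $\phi=\sum_{i=1}^{N_\lambda} a_i e_i$, I would introduce the lifted function
\begin{equation*}
u(x,s)=\sum_{i=1}^{N_\lambda} a_i \frac{\sinh(\sqrt{\tau_i}\,s)}{\sqrt{\tau_i}}\, e_i(x), \qquad (x,s)\in \dom\times(-1,1).
\end{equation*}
It satisfies $\partial_s^2 u+\Delta u=0$, $u=0$ on $\partial\dom\times(-1,1)$, $u(\cdot,0)=0$ and $\partial_s u(\cdot,0)=\phi$. By orthonormality one gets
\begin{equation*}
\|u\|_{H^1(\dom\times(-1,1))}^2\le C e^{2\sqrt{\lambda}}\sum_i a_i^2,
\end{equation*}
while $\sum_i a_i^2=\|\phi\|_{L^2(\dom)}^2$.

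The key steps are as follows.

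First, by boundary elliptic regularity and analyticity estimates (Cauchy-type bounds $|\partial_x^\alpha\partial_s^k u|\le C\rho^{-|\alpha|-k}(|\alpha|+k)!\,\|u\|_{L^2}$ on slightly smaller cylinders), $\phi=\partial_s u(\cdot,0)$ extends holomorphically to a complex neighbourhood of $\overline{\dom}$ of uniform size. The bound is $|\phi|\le Ce^{C\sqrt\lambda}\|\phi\|_{L^2(\dom)}$. This is simpler than it looks, since for the finite sum one may use the quantitative analyticity of the $e_i$ directly via the elliptic problem in $(x,s)$.

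Second, I would apply the Vessella/Nadirashvili propagation of smallness for analytic functions: if $f$ is analytic on a neighbourhood of $\overline{\dom}$ with $|\partial^\alpha f|\le M\,|\alpha|!\,\rho^{-|\alpha|}$ and $E\subset\dom$ has $|E|>0$, then
\begin{equation*}
\sup_\dom|f|\le C M^{1-\theta}\Big(\sup_E|f|\Big)^\theta, \qquad \theta=\theta(|E|,\rho,\dom)\in(0,1).
\end{equation*}
To pass from $\sup_E$ to $L^2(\dom_0)$, I would choose $E=\{x\in\dom_0: |\phi(x)|\le 2|\dom_0|^{-1/2}\|\phi\|_{L^2(\dom_0)}\}$, which by Chebyshev has $|E|\ge \tfrac34|\dom_0|$. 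This yields
\begin{equation*}
\|\phi\|_{L^2(\dom)}\le C\big(e^{C\sqrt\lambda}\|\phi\|_{L^2(\dom)}\big)^{1-\theta}\|\phi\|_{L^2(\dom_0)}^\theta.
\end{equation*}
Rearranging gives $\|\phi\|_{L^2(\dom)}\le Ce^{C\sqrt\lambda/\theta}\|\phi\|_{L^2(\dom_0)}$. Squaring this gives \eqref{eq:spectral_ineq}, after enlarging $C$ so that $C\ge1$.

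The main obstacle is the second step: a quantitative analyticity bound up to the boundary that is uniform in $\lambda$, with the growth $e^{C\sqrt\lambda}$ isolated in the constant $M$ while the radius $\rho$ stays independent of $\lambda$. This is what makes $\theta$ independent of $\lambda$. I would obtain it via the harmonic extension in $(x,s)$, with odd reflection in $s$ and analyticity of solutions to elliptic equations with analytic boundary (here smooth suffices after localizing to an interior region containing a positive-measure part of $\dom_0$). Indeed, replacing $\dom_0$ by $\dom_0\cap K$ for a compact $K\subset\dom$ with $|\dom_0\cap K|>0$ avoids boundary analyticity altogether. The interior estimate then propagates to all of $\dom$ through a standard chain-of-balls three-ball argument for $u$ combined with a boundary layer estimate, which is where the $e^{C\sqrt\lambda}$ loss enters. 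Since the paper only needs the statement, in practice I would cite Apraiz--Escauriaza--Wang--Zhang for this step.
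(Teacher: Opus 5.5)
Your proposal agrees with the paper, which gives no independent argument and simply invokes \cite[Theorem~5]{AEWZ}. The route you sketch is essentially the AEWZ proof: the lifting to $\partial_s^2u+\Delta u=0$, interior analyticity plus Vessella's estimate on a ball where $\dom_0$ has positive measure, Chebyshev to pass from $\sup_E$ to $L^2(\dom_0)$, and then the open-set Lebeau--Robbiano inequality to reach all of $\dom$. One caveat: your first step as literally stated is false for a merely $C^\infty$ boundary, because Dirichlet eigenfunctions need not be real-analytic up to $\partial\dom$ (for $e_1$, Hopf's lemma would force $\partial\dom$ to be analytic). So you should use the localized version you describe at the end, with the passage from the interior ball to $\dom$ handled by the classical Lebeau--Robbiano spectral inequality.
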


This result follows from \cite[Theorem~5]{AEWZ} and can be viewed as an extension of the classical Lebeau--Robbiano spectral inequality (see \cite{LR95}), which is typically formulated for nonempty open subsets of $\dom$. In contrast, the estimate above applies to measurable subsets of positive Lebesgue measure.

For later use, we introduce the matrix
\begin{equation}\label{def_JN}
J_{N_\lambda} := \big((e_i,e_j)_{L^2(\dom_0)}\big)_{i,j=1}^{N_\lambda},
\end{equation}
which represents the Gram matrix of the first $N_\lambda$ eigenfunctions restricted to $\dom_0$. In view of \Cref{prop:spectral}, this matrix is symmetric positive definite and satisfies
\begin{equation}\label{eq:JN_bound}
\xi^\top J_{N_\lambda}\,\xi
\ge C^{-1} e^{-C\sqrt{\lambda}} \|\xi\|^2,
\quad \forall \xi\in\mathbb R^{N_\lambda},
\end{equation}
where $\|\cdot\|$ denotes the usual Euclidean norm in $\mathbb{R}^{N_\lambda}$.

\section{Mean square stabilization}\label{sec:mean_s}

In this section, we prove \Cref{theo:main}. 
We recall that the closed-loop system under consideration is
\begin{equation}\label{eq:closed_loop_section3}
\begin{cases}
\d y = \left(\Delta y + cy - \gamma_\lambda \chi_{\dom_0} P_{N_\lambda} y\right)\dt + ay\,\d W(t) & \textnormal{in } Q_T, \\
y = 0 & \textnormal{on } \Sigma_T, \\
y(0) = y_0 & \textnormal{in } \dom,
\end{cases}
\end{equation}
where $\lambda>0$ is a fixed quantity. For simplicity, hereinafter we write $N = N(\lambda)$.

We begin by deriving a series of auxiliary estimates that will be used in the proof of \Cref{theo:main}. 
To this end, the solution is decomposed along the eigenfunctions of the Laplacian.

Let $\{e_k\}_{k\geq 1}$ be the eigenfunctions of the Dirichlet Laplacian introduced in \Cref{sec:intro}, and denote by $y_k(t) = (y(t),e_k)_{L^2(\dom)}$ the corresponding Fourier coefficients. 
For each $k \in \mathbb{N}$, we consider the following ordinary SDE
\begin{equation}\label{eq:decomp_low}
\begin{cases}
\d y_k+\tau_k y_k\dt=c y_k -\gamma_{\lambda}\sum_{i=1}^{N} y_i(e_i,e_k)_{L^2(\dom_0)}\dt+a y_k \d W(t) &t\in(0,T), \\
y_k(0)=(y_0,e_k).
\end{cases}
\end{equation}
This equation is obtained by projecting \eqref{eq:closed_loop_section3} onto the eigenfunction $e_k$.

The dynamics are split into low- and high-frequency components. 
We introduce the notation
\begin{equation}\label{vec_not}
X_N:=\left(\begin{array}{c}y_1 \\y_2 \\\vdots \\y_N\end{array}\right) 
\quad \textnormal{and} \quad 
A_{N}:=-\left(\begin{array}{cccc}\tau_1 &  &  &  \\ & \tau_2 &  &  \\ &  & \ddots &  \\ &  &  & \tau_N\end{array}\right),
\end{equation}
which collects the low-frequency modes of the solution. In particular, note that
\[
\|P_N y(t)\|_{L^2(\dom)}^2=\sum_{k=1}^N |y_k(t)|^2=\|X_N(t)\|^2.
\]

We now estimate the low-frequency component.

\begin{lem}\label{lem:lower}
Let $\beta>0$, $\lambda>0$ and $\gamma_\lambda>0$ be constants. Then, for the vector defined in \eqref{vec_not}, we have
\begin{align}\notag
\esp&\left(e^{\beta t}\|X_N(t)\|^2\right)\\ \label{eq:est_lower}
&\leq \esp\left(\|X_N(0)\|^2\right) + \esp\left(\int_0^t e^{\beta s} \left[\beta+a^2+2c-2\tau_1-2C^{-1}e^{-C\sqrt{\lambda}}\gamma_{\lambda}\right] \|X_N(s)\|^2\d{s} \right), \quad \forall t\geq 0.
\end{align}
\end{lem}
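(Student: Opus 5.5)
The plan is to apply Itô's formula to $e^{\beta t}\|X_N(t)\|^2$, using the finite-dimensional SDE system \eqref{eq:decomp_low} for $k=1,\dots,N$. In vector form, this system reads
\begin{equation*}
\d X_N = \left(A_N X_N + c X_N - \gamma_\lambda J_{N} X_N\right)\dt + a X_N\,\d W(t),
\end{equation*}
with $J_N$ the Gram matrix \eqref{def_JN}. The point is that for $k\le N$ the feedback term only involves $(e_i,e_k)_{L^2(\dom_0)}$ with $i,k\le N$, so the low-frequency block is closed: it does not couple to the high modes.

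Itô's formula for the function $(t,x)\mapsto e^{\beta t}\|x\|^2$ then gives
\begin{align*}
\d\left(e^{\beta t}\|X_N\|^2\right) = e^{\beta t}\Big[\beta\|X_N\|^2 + 2X_N^\top A_N X_N + 2c\|X_N\|^2 - 2\gamma_\lambda X_N^\top J_N X_N + a^2\|X_N\|^2\Big]\dt + 2a e^{\beta t}\|X_N\|^2\,\d W(t).
\end{align*}
Each drift term can be bounded pointwise in $\omega$ and $s$:
\begin{itemize}
\item Since $\tau_k\ge\tau_1$ for every $k$, we have $X_N^\top A_N X_N = -\sum_{k\le N}\tau_k y_k^2 \le -\tau_1\|X_N\|^2$.
\item By \eqref{eq:JN_bound}, which is the spectral inequality of \Cref{prop:spectral}, we have $X_N^\top J_N X_N \ge C^{-1}e^{-C\sqrt\lambda}\|X_N\|^2$.
\item Because $\gamma_\lambda>0$, the second bound enters with the correct sign.
\end{itemize}
Together these give the bracket $\beta+a^2+2c-2\tau_1-2C^{-1}e^{-C\sqrt\lambda}\gamma_\lambda$ appearing in \eqref{eq:est_lower}.

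Integrating from $0$ to $t$ and taking expectations, it remains to show that the stochastic integral $\int_0^t 2ae^{\beta s}\|X_N(s)\|^2\,\d W(s)$ has zero mean. This is the only technical step. It would follow from $\esp\int_0^t e^{2\beta s}\|X_N\|^4\,\d s<\infty$, which requires fourth moments; the solution is only known to lie in $L^2_{\fil}(\Omega;C([0,T];L^2))$, so these are not available for $y_0$ merely in $L^2$.

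I would handle this by localization. Introduce the stopping times $\sigma_n=\inf\{s:\|X_N(s)\|\ge n\}\wedge t$. At $\sigma_n$ the stopped stochastic integral is a true martingale with zero mean, so the inequality holds with $t$ replaced by $\sigma_n$, and the integral on the right runs up to $\sigma_n$. To pass to the limit $n\to\infty$:
\begin{itemize}
\item On the right-hand side, the integrand is dominated by $|\text{const}|\,e^{\beta s}\|X_N\|^2$, which is integrable since $X_N\in L^2_{\fil}(\Omega;C([0,t];\mathbb R^N))$. Dominated convergence therefore applies.
\item On the left-hand side, $e^{\beta\sigma_n}\|X_N(\sigma_n)\|^2\to e^{\beta t}\|X_N(t)\|^2$ a.s., by continuity of paths and $\sigma_n\to t$. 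This term is dominated by $e^{\beta t}\sup_{[0,t]}\|X_N\|^2$, which is integrable, so dominated convergence (or Fatou's lemma) gives the limit.
\end{itemize}
Passing to the limit yields \eqref{eq:est_lower} for every $t\ge0$.

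If Itô's formula needs justification at the level of the coefficients, note that each $y_k$ satisfies the scalar Itô equation \eqref{eq:decomp_low}. This equation follows from the weak formulation in \Cref{def:weak_solution} by taking $\phi=e_k\in H_0^1(\dom)$.
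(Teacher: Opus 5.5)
Your proof is correct and is essentially the paper's argument. You apply Itô's formula to $e^{\beta t}\|X_N\|^2$ on the closed $N$-dimensional system obtained from \eqref{eq:decomp_low}, use the pointwise bounds $X_N^\top A_N X_N\le -\tau_1\|X_N\|^2$ and \eqref{eq:JN_bound}, and then integrate and take expectations. The only difference is that you justify, by localization, why the stochastic integral has zero mean, which the paper takes for granted; this could also be obtained from the Burkholder--Davis--Gundy bound as in \Cref{lem:BDG_sup}, since an integrable supremum makes the local martingale a true martingale.
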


\begin{proof}
By It\^{o}'s formula applied to $e^{\beta t}y_k^2$, where $y_k$ satisfies \eqref{eq:decomp_low}, we have
\begin{align*}
\d\left(e^{\beta t}y_k^2\right)
&=e^{\beta t}y_k^2(\beta+a^2+2c)\dt
-2\gamma_{\lambda}e^{\beta t}\sum_{i=1}^{N}y_k y_i(e_{i},e_{k})_{L^2(\dom_0)}\dt \notag\\
&\quad -2\tau_ke^{\beta t}y_k^2\dt
+2ae^{\beta t}y_k^2\d W(t).
\end{align*}
Adding up the above identities for $k\in\inter{1,N}$ and using the vector notation \eqref{def_JN} and \eqref{vec_not}, we obtain
\begin{align*}
\d\left(e^{\beta t}\|X_N\|^2\right)
&=e^{\beta t}\left[(\beta+a^2+2c)\|X_N\|^2+2X_N^TA_{N}X_N\right]\dt
-2e^{\beta t}\gamma_{\lambda}X_N^TJ_NX_N\dt \\
&\quad + 2 a e^{\beta t}\|X_N\|^2 \d W(t).
\end{align*}
Using \eqref{eq:JN_bound}, we have $X_N^T J_N X_N \ge C^{-1}e^{-C\sqrt{\lambda}}\|X_N\|^2$, and therefore
\begin{align*}
\d\left(e^{\beta t}\|X_N\|^2\right)
&\leq e^{\beta t}\left[(\beta+a^2+2c)\|X_N\|^2+2X_N^TA_{N}X_N\right]\dt
-2e^{\beta t}C^{-1}e^{-C\sqrt{\lambda}}\gamma_{\lambda}\|X_N\|^2\dt \\
&\quad + 2 a e^{\beta t}\|X_N\|^2 \d W(t).
\end{align*}
From the definition of $A_N$ in \eqref{vec_not}, we have $X_N^TA_NX_N \le -\tau_1 \|X_N\|^2$, which yields
\begin{align*}
\d\left(e^{\beta t}\|X_N\|^2\right)
&\leq e^{\beta t}\left(\beta+a^2+2c-2\tau_1-2C^{-1}e^{-C\sqrt{\lambda}}\gamma_{\lambda}\right)\|X_N\|^2\dt \\
&\quad + 2 a e^{\beta t}\|X_N\|^2 \d W(t).
\end{align*}
Integrating in $[0,t]$ and taking expectation in the above estimate, we obtain
\begin{align*}
\esp\left(e^{\beta t}\|X_N(t)\|^2\right)
&\leq \esp\left(\|X_N(0)\|^2\right) \notag\\
&\quad + \esp\left(\int_0^t e^{\beta s} \left[\beta+a^2+2c-2\tau_1-2C^{-1}e^{-C\sqrt{\lambda}}\gamma_{\lambda}\right] \|X_N(s)\|^2\d{s} \right),
\end{align*}
which proves the result.
\end{proof}

We now turn to the high-frequency part of the solution. For the remaining modes, $k \ge N+1$, we define
\begin{equation}\label{rem:modes}
z=\sum_{k=N+1}^{\infty}y_ke_{k}=P_{N}^\bot y.
\end{equation}

\begin{lem}\label{lem:solu_high}
The stochastic process defined in \eqref{rem:modes} satisfies
\begin{equation}\label{eq:proy_high}
\begin{cases}
\d z=\left[\Delta z + cz -\gamma_{\lambda}P_{N}^\bot\left(\chi_{\dom_0}P_N(y)\right)\right]\dt+az\d W(t) &\textnormal{in } Q_T, \\
z=0 &\textnormal{on } \Sigma_T, \\
z(0)=P_N^\bot y_0 &\textnormal{in } \dom,
\end{cases}
\end{equation}
in the sense of \Cref{def:weak_solution}.
\end{lem}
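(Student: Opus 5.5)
The plan is to obtain the equation for $z=P_N^\bot y$ by testing the weak formulation of \eqref{eq:closed_loop_section3} against suitable functions and using that $P_N^\bot$ commutes with the Laplacian on the eigenbasis. First, I check the regularity requirements of \Cref{def:weak_solution}. The projection $P_N^\bot$ is a bounded linear operator on $L^2(\dom)$ and on $H_0^1(\dom)$; on the latter, $\|\nabla P_N^\bot w\|^2=\sum_{k>N}\tau_k|w_k|^2\le\|\nabla w\|^2$, and since $e_k\in H_0^1(\dom)$ for $k\le N$ the projection preserves the space. Since $y\in L^2_{\fil}(\Omega;C([0,T];L^2(\dom)))\cap L^2_{\fil}(0,T;H_0^1(\dom))$ and is adapted, $z$ inherits adaptedness, continuity in time, and both integrability properties. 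The forcing $F(z)$ is not a function of $z$ alone, but this is harmless: the term $-\gamma_\lambda P_N^\bot(\chi_{\dom_0}P_N y)$ is an adapted $L^2(\dom)$-valued source in $L^2_{\fil}(0,T;L^2(\dom))$, so it is understood as a given forcing.

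The core step is the identity in item 3). Fix $\phi\in H_0^1(\dom)$ and apply the weak formulation for $y$ with the test function $P_N^\bot\phi\in H_0^1(\dom)$. By self-adjointness of $P_N^\bot$ in $L^2$, we have $(y(t),P_N^\bot\phi)=(z(t),\phi)$, $(y_0,P_N^\bot\phi)=(P_N^\bot y_0,\phi)$, $(cy,P_N^\bot\phi)=(cz,\phi)$ and $(ay,P_N^\bot\phi)=(az,\phi)$. The feedback term gives
\[
-\gamma_\lambda(\chi_{\dom_0}P_Ny,P_N^\bot\phi)=-\gamma_\lambda(P_N^\bot(\chi_{\dom_0}P_Ny),\phi).
\]
For the gradient term we need $(\nabla y,\nabla P_N^\bot\phi)=(\nabla z,\nabla\phi)$. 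This follows from $(\nabla u,\nabla v)=\sum_k\tau_k u_kv_k$ for $u,v\in H_0^1(\dom)$, which holds because $\{e_k/\sqrt{\tau_k}\}$ is orthonormal in $H_0^1$ with the inner product $(\nabla\cdot,\nabla\cdot)$. Both sides then equal $\sum_{k>N}\tau_k y_k\phi_k$.

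The only delicate point is this last identity, namely justifying the spectral representation of the Dirichlet form on $H_0^1(\dom)$. I would handle it by noting that $(\nabla e_k,\nabla v)=\tau_k(e_k,v)$ for $v\in H_0^1(\dom)$, which follows from integrating by parts with $e_k\in H^2(\dom)\cap H_0^1(\dom)$. This gives $(\nabla P_N y,\nabla v)=(\nabla y,\nabla P_N v)$, since both equal $\sum_{k\le N}\tau_k y_kv_k$, a finite sum. Subtracting this from $(\nabla y,\nabla v)$ yields the claim without any infinite-series convergence issue. The stochastic integral identity is also immediate: it is linear in the fixed test function, holds a.s. for each $t$, and no exchange of limits is needed. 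Finally, uniqueness of $z$ as a solution (if desired) follows from \Cref{thm:general_wellposed} applied with the forcing fixed.
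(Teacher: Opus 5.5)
Your proof is correct, but it takes a different route from the paper.

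\textbf{The paper's route.} In Appendix~\ref{app-gal} the paper uses a Galerkin-type approximation restricted to the high modes. It sets $z^m=\sum_{j=N+1}^m y_je_j$, where $y_j$ are the Fourier coefficients of $y$ solving \eqref{eq:decomp_low}. It then checks that $z^m$ satisfies the projected finite-dimensional system \eqref{weak_form}, using $(v,e_k)_{L^2(\dom)}=(P_N^\bot v,e_k)_{L^2(\dom)}$ for $k\ge N+1$. Finally, it appeals, without details, to an adaptation of \cite[Proposition 2.3]{GCL15} to show that $\{z^m\}$ is Cauchy in $L^2_{\fil}(\Omega;C([0,T];L^2(\dom)))\cap L^2_{\fil}(0,T;H_0^1(\dom))$, and passes to the limit in the weak form.

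\textbf{Your route.} You work directly with the weak solution $y$ that is already available. You test its weak formulation against $P_N^\bot\phi\in H_0^1(\dom)$ and use two facts:
\begin{itemize}
\item the $L^2$-self-adjointness of $P_N^\bot$;
\item the finite-sum identity $(\nabla y,\nabla P_N\phi)=\sum_{k\le N}\tau_k y_k\phi_k=(\nabla P_N y,\nabla\phi)$, which follows from $(\nabla e_k,\nabla v)=\tau_k(e_k,v)$ since $e_k\in H^2(\dom)\cap H_0^1(\dom)$.
\end{itemize}
Together these give item 3) of \Cref{def:weak_solution} for $z$ at once. Items 1)--2) follow because $P_N^\bot$ is a deterministic operator, bounded on both $L^2(\dom)$ and $H_0^1(\dom)$.

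\textbf{Comparison.} Your argument is shorter and fully self-contained. It needs no limiting procedure and no strong-topology convergence of the partial sums, which is exactly the step the paper only sketches by reference. It also makes clear that the lemma is simply a consequence of $P_N^\bot$ commuting with the Dirichlet form. The Galerkin approach is the standard template in this literature and resembles a constructive existence proof. Here, however, $z=P_N^\bot y$ is defined from the known $y$, so it gains nothing essential over direct testing.

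Two minor remarks. First, reading the feedback term as a given adapted source in $L^2_{\fil}(0,T;L^2(\dom))$ rather than as a map $F(z)$ is appropriate; the paper uses \Cref{def:weak_solution} in the same loose sense. Second, your closing uniqueness claim is not needed for the lemma. Moreover, \Cref{thm:general_wellposed} as stated covers maps $F$ of the solution, not a random time-dependent source, although the linear extension is routine.
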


The proof follows from a standard Galerkin approximation. We give a brief sketch in \Cref{app-gal}.

We next derive an estimate for the high-frequency part.

\begin{lem}\label{lem:high}
For any $\epsilon>0$, $\beta>0$ and $\lambda>0$, the solution $z$ to \eqref{eq:proy_high} satisfies
\begin{align}\notag
\esp\left(e^{\beta t}\|z(t)\|_{L^2(\dom)}^2\right)
&\leq \esp\left(\|z(0)\|^2_{L^2(\dom)}\right)
+\left(a^2+2c-\beta+\epsilon\right)\esp\left(\int_{0}^t e^{\beta s}\|z(s)\|^2_{L^2(\dom)}\d{s}\right)\\ \label{eq:est_high}
&\quad +\frac{\gamma_{\lambda}^2}{\epsilon}\esp\left(\int_{0}^{t}e^{\beta s}\|X_N(s)\|^2\d{s}\right), \qquad \forall t\geq 0.
\end{align}
\end{lem}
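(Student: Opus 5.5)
The plan is to apply the Itô formula to $e^{\beta t}\|z(t)\|^2_{L^2(\dom)}$, where $z$ solves \eqref{eq:proy_high}. This is the infinite-dimensional analogue of the computation in the proof of \Cref{lem:lower}. Since $z$ is only a weak solution in the sense of \Cref{def:weak_solution}, I would justify the formula in one of two ways. The first is the variational Itô formula for the squared norm in the Gelfand triple $H_0^1(\dom)\subset L^2(\dom)\subset H^{-1}(\dom)$ (Krylov--Rozovskii / Pardoux). The second is to perform the computation on each mode $y_k$, $k\ge N+1$, exactly as in \Cref{lem:lower}, sum over $k\in\inter{N+1,M}$, and let $M\to\infty$ by monotone convergence. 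Either route gives
\begin{align*}
\d\left(e^{\beta t}\|z\|^2\right) &= e^{\beta t}\Big[(\beta+a^2+2c)\|z\|^2 - 2\|\nabla z\|^2 - 2\gamma_\lambda\big(z,P_N^\bot(\chi_{\dom_0}P_N y)\big)\Big]\dt \\
&\quad + 2a e^{\beta t}\|z\|^2\,\d W(t).
\end{align*}

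Next I would bound the three drift contributions.

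\textbf{Dissipative term.} Because $z\in \overline{\mathrm{span}}\{e_k\}_{k\ge N+1}$, the Poincaré-type bound on high modes gives $\|\nabla z\|^2=\sum_{k> N}\tau_k y_k^2\ge \tau_{N+1}\|z\|^2$. By definition of $N=N_\lambda$, we have $\tau_{N+1}>\lambda$.

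\textbf{Coupling term.} Using that $P_N^\bot$ is self-adjoint and $P_N^\bot z=z$, it reduces to $-2\gamma_\lambda(z,\chi_{\dom_0}P_Ny)$. Young's inequality then gives the bound $\epsilon\|z\|^2+\frac{\gamma_\lambda^2}{\epsilon}\|\chi_{\dom_0}P_Ny\|^2$. Finally, $\|\chi_{\dom_0}P_Ny\|^2\le \|P_Ny\|^2=\|X_N\|^2$.

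\textbf{Collected drift.} Altogether the drift coefficient of $\|z\|^2$ is $\beta+a^2+2c+\epsilon-2\tau_{N+1}$. To reach the stated form $a^2+2c-\beta+\epsilon$ one needs $2\tau_{N+1}\ge 2\beta$, i.e.\ $\beta\le\tau_{N+1}$. This holds in the regime of interest, $\beta\le\lambda<\tau_{N+1}$. I would state this restriction explicitly, since for arbitrary $\beta$ the bound as written is not what the computation gives. Keeping $-2(\tau_{N+1}-\beta)$ would even give a sharper estimate.

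Then I would integrate on $[0,t]$ and take expectations. The stochastic integral $\int_0^t 2a e^{\beta s}\|z\|^2\d W$ is a true martingale with zero mean, because $z\in L^2_{\fil}(\Omega;C([0,t];L^2(\dom)))$ gives $\esp\int_0^t e^{2\beta s}\|z\|^4\,ds<\infty$ after localization. If needed, one uses stopping times $\tau_R=\inf\{s:\|z(s)\|\ge R\}$ and Fatou. This yields \eqref{eq:est_high}.

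The main obstacle is the rigorous justification of the Itô formula for the weak solution, together with the vanishing of the martingale term. I would handle it through the Galerkin/mode-by-mode argument, consistent with the sketch in \Cref{app-gal}. The remaining steps are elementary inequalities.
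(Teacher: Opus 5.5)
Your argument is the paper's own: It\^o's formula for $e^{\beta t}\|z(t)\|^2_{L^2(\dom)}$, the high-mode bound $\|\nabla z\|^2_{L^2(\dom)}\ge\tau_{N+1}\|z\|^2_{L^2(\dom)}$, Young's inequality on the coupling term, then expectation. You are right that the coefficient $a^2+2c-\beta$ needs $\beta\le\tau_{N+1}$; the paper's step ``since $\tau_k\ge\lambda$'' tacitly assumes $\beta\le\lambda$, and for $\beta>\tau_{N+1}+\epsilon/2$ the stated inequality actually fails at small $t$ (take $y_0=e_{N+1}$).

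One correction to your side remark: $\beta\le\lambda$ is not the regime in which the paper uses the lemma. The proof of \Cref{theo:main} takes $\beta=\lambda+\tau_1$, which can exceed $\tau_{N+1}$. For example, with $\tau_k=k^2$ on $(0,\pi)$, $\lambda=15.5$ and $a^2+2c<15.5$, one gets $\tau_{N+1}=16<\beta=16.5$. In that setting only your sharper coefficient $\beta+a^2+2c+\epsilon-2\tau_{N+1}$ is legitimate.
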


\begin{proof}
Applying It\^{o}'s formula to the mapping $t\mapsto e^{\beta t}\|z(t)\|^2_{L^2(\dom)}$ and integrating over $[0,t]$, we obtain
\begin{align*}
e^{\beta t}\|z(t)\|^2_{L^2(\dom)}
&=\|z(0)\|^2_{L^2(\dom)}
+\beta \int_0^{t} e^{\beta s}\|z(s)\|^2_{L^2(\dom)}\d{s} \\
&\quad +2\int_{0}^t e^{\beta s}\left(z(s),\Delta z(s)+cz(s)\right)_{L^2(\dom)}\d{s} \\
&\quad -2\gamma_{\lambda}\int_{0}^{t}e^{\beta s}\left(z(s),P_{N}^{\bot}\left(\chi_{\dom_0}P_N y(s)\right)\right)_{L^2(\dom)}\d{s} \\
&\quad +a^2\int_{0}^{t}e^{\beta s}\|z(s)\|^2_{L^2(\dom)}\d{s}
+ 2a\int_{0}^{t}e^{\beta s}\|z(s)\|^2_{L^2(\dom)}\d W(s).
\end{align*}
Using the spectral decomposition \eqref{rem:modes}, we have
\[
2\int_{0}^t e^{\beta s}\left(z(s),\Delta z(s)\right)_{L^2(\dom)}\d{s}
=-2\int_{0}^{t}e^{\beta s} \sum_{k=N+1}^\infty\tau_k|y_k(s)|^2\d{s}.
\]
Since $\tau_k \ge \lambda$ for all $k\ge N+1$, it follows that
\begin{align*}
e^{\beta t}\|z(t)\|^2_{L^2(\dom)}
&\leq \|z(0)\|^2_{L^2(\dom)}
+(a^2+2c-\beta)\int_{0}^{t}e^{\beta s}\|z(s)\|^2_{L^2(\dom)}\d{s} \\
&\quad +\left|2\gamma_{\lambda}\int_{0}^{t}e^{\beta s}\left(z(s),P_{N}^{\bot}\left(\chi_{\dom_0}P_N y(s)\right)\right)_{L^2(\dom)}\d{s}\right| \\
&\quad + 2a\int_{0}^{t}e^{\beta s}\|z(s)\|^2_{L^2(\dom)}\d W(s).
\end{align*}
Using Cauchy--Schwarz and $\|P_N^\bot(\chi_{\dom_0}P_N y)\|_{L^2(\dom)}\leq \|P_N y\|_{L^2(\dom)}$, we obtain
\begin{align*}
e^{\beta t}\|z(t)\|^2_{L^2(\dom)}
&\leq \|z(0)\|^2_{L^2(\dom)}
+(a^2+2c-\beta)\int_{0}^{t}e^{\beta s}\|z(s)\|^2_{L^2(\dom)}\d{s} \\
&\quad +2\gamma_{\lambda}\int_{0}^{t}e^{\beta s}\|z(s)\|_{L^2(\dom)}\|P_N y(s)\|_{L^2(\dom)}\d{s} \\
&\quad + 2a\int_{0}^{t}e^{\beta s}\|z(s)\|^2_{L^2(\dom)}\d W(s).
\end{align*}
Applying Young's inequality, for any $\epsilon>0$, we deduce
\begin{align*}
e^{\beta t}\|z(t)\|^2_{L^2(\dom)}
&\leq \|z(0)\|^2_{L^2(\dom)}
+(a^2+2c-\beta+\epsilon)\int_{0}^{t}e^{\beta s}\|z(s)\|^2_{L^2(\dom)}\d{s} \\
&\quad +\frac{\gamma_{\lambda}^2}{\epsilon}\int_{0}^{t}e^{\beta s}\|P_N y(s)\|^2_{L^2(\dom)}\d{s}
+ 2a\int_{0}^{t}e^{\beta s}\|z(s)\|^2_{L^2(\dom)}\d W(s).
\end{align*}
Taking expectation yields the desired estimate.
\end{proof}

We are now in a position to prove our main theorem.
\begin{proof}[Proof of \Cref{theo:main}]
Let $\lambda>\max\{2\tau_1,a^2+2c\}$ and set
\begin{equation}\label{def:gamma_lambda}
\gamma_{\lambda}=C e^{C\sqrt{\lambda}}\lambda,
\end{equation}
where $C\geq 1$ is the constant appearing in \eqref{eq:JN_bound}. 

We apply \Cref{lem:lower} and \Cref{lem:high} with the weight
\begin{equation*}
\beta=\lambda+\tau_1,
\end{equation*}
and choose
\begin{equation*}
\epsilon=\beta-a^2-2c=\lambda+\tau_1-a^2-2c.
\end{equation*}
By the choice of $\lambda$, we have
\begin{equation*}
\epsilon>\tau_1>0.
\end{equation*}

Let $\mu_{\lambda}\geq 1$ be a constant to be fixed later. Adding the estimates in \Cref{lem:lower} and \Cref{lem:high}, using that $\|X_N\|=\|P_N y\|_{L^2(\dom)}$ and $z=P_N^\bot y$, we obtain
\begin{align}\label{add_estimates}
\mu_{\lambda}\esp&\left(e^{\beta t}\|P_N y(t)\|_{L^2(\dom)}^2\right)+\esp\left(e^{\beta t}\|P_N^{\bot}y(t)\|_{L^2(\dom)}^2\right)\notag\\
&\leq \mu_{\lambda}\esp\left(\|P_N y(0)\|^2_{L^2(\dom)}\right) + \esp\left(\|P_N^\bot y(0)\|^2_{L^2(\dom)}\right) \\
&\quad + \mu_{\lambda}\esp\left(\int_0^t e^{\beta s} \left[\beta+a^2+2c-2\tau_1-2C^{-1}e^{-C\sqrt{\lambda}}\gamma_{\lambda}\right] \|P_N y(s)\|^2\d{s} \right)\notag\\
&\quad +\left(a^2+2c-\beta+\epsilon\right)\esp\left(\int_{0}^t e^{\beta s}\|P_{N}^\bot y(s)\|^2\d{s}\right)\notag\\
&\quad +\frac{\gamma_{\lambda}^2}{\epsilon}\esp\left(\int_{0}^{t}e^{\beta s}\|P_N y(s)\|^2\d{s}\right).\notag
\end{align}

By the choice of $\gamma_\lambda$, we have $2C^{-1}e^{-C\sqrt{\lambda}}\gamma_{\lambda}=2\lambda$, and therefore
\begin{equation*}
\beta+a^2+2c-2\tau_1-2C^{-1}e^{-C\sqrt{\lambda}}\gamma_{\lambda}
=-(\lambda+\tau_1-a^2-2c)
=-\epsilon.
\end{equation*}
Moreover,
\begin{equation*}
a^2+2c-\beta+\epsilon=0.
\end{equation*}

Hence \eqref{add_estimates} reduces to
\begin{align}\label{add_estimates_2}
\mu_{\lambda}\esp&\left(e^{\beta t}\|P_N y(t)\|^2\right)+\esp\left(e^{\beta t}\|P_N^{\bot}y(t)\|^2\right)\notag\\
&\leq \mu_{\lambda}\esp\left(\|P_N y(0)\|^2\right) + \esp\left(\|P_N^\bot y(0)\|^2\right) \\
&\quad + \left[-\mu_{\lambda}\epsilon+\frac{\gamma_{\lambda}^2}{\epsilon}\right]\esp\left(\int_{0}^{t}e^{\beta s}\|P_N y(s)\|^2\d{s}\right).\notag
\end{align}

We choose
\begin{equation*}
\mu_{\lambda}:=\max\left\{1,\frac{\gamma_{\lambda}^2}{\epsilon^2}\right\}.
\end{equation*}
Then $\mu_\lambda\geq 1$ and $-\mu_\lambda\epsilon+\frac{\gamma_\lambda^2}{\epsilon}\leq 0$. Hence, \eqref{add_estimates_2} reduces to
\begin{align*}
\mu_{\lambda}\esp\left(e^{\beta t}\|P_N y(t)\|^2\right)+\esp\left(e^{\beta t}\|P_N^{\bot}y(t)\|^2\right)
\leq \mu_{\lambda}\esp\left(\|P_N y(0)\|^2\right) + \esp\left(\|P_N^\bot y(0)\|^2\right).
\end{align*}

Using the orthogonality of the eigenfunctions and the fact that $\mu_\lambda\geq 1$, we obtain
\begin{equation*}
\esp\left(e^{\beta t}\|y(t)\|^2_{L^2(\dom)}\right)
\leq \mu_{\lambda}\esp\left(\|y_0\|^2_{L^2(\dom)}\right).
\end{equation*}
Since $\beta=\lambda+\tau_1$, we have $e^{-\beta t}=e^{-\lambda t}e^{-\tau_1 t}\leq e^{-\lambda t}$, and therefore
\begin{equation}\label{eq:est_final_unified}
\esp\left(\|y(t)\|^2_{L^2(\dom)}\right)
\leq \mu_{\lambda} e^{-\lambda t}\esp\left(\|y_0\|^2_{L^2(\dom)}\right).
\end{equation}

It remains to estimate $\mu_\lambda$. Since $\epsilon>\tau_1$, we have
\begin{equation*}
\mu_{\lambda}
\leq 1+\frac{\gamma_{\lambda}^2}{\epsilon^2}
\leq C e^{C\sqrt{\lambda}}\lambda^2.
\end{equation*}
Moreover, the function $\lambda\mapsto \lambda^2 e^{-\alpha\sqrt{\lambda}}$ is bounded on $[0,\infty)$ for any $\alpha>0$. Hence, the polynomial factor can be absorbed into the exponential (up to redefining $C$), and we obtain
\begin{equation*}
\mu_{\lambda}\leq C e^{C\sqrt{\lambda}}.
\end{equation*}
Substituting into \eqref{eq:est_final_unified}, we conclude that
\begin{equation}\label{eq:est_final_case_unified}
\esp\left(\|y(t)\|^2_{L^2(\dom)}\right)
\leq C e^{C\sqrt{\lambda}} e^{-\lambda t}\esp\left(\|y_0\|^2_{L^2(\dom)}\right),
\qquad \forall t\geq 0.
\end{equation}

Finally, using \eqref{eq:def_feedback_intro}, we have
\begin{equation*}
\esp\left(\|\mathscr H_{\lambda}y(t)\|_{L^2(\dom)}^2\right)
=\gamma_\lambda^2 \esp\left(\|P_N y(t)\|_{L^2(\dom)}^2\right)
\leq \gamma_\lambda^2 \esp\left(\|y(t)\|_{L^2(\dom)}^2\right).
\end{equation*}
Combining with \eqref{eq:est_final_case_unified}, recalling \eqref{def:gamma_lambda}, and arguing as above, we obtain
\begin{equation*}
\esp\left(\|\mathscr H_{\lambda}y(t)\|^2_{L^2(\dom)}\right)
\leq C e^{C\sqrt{\lambda}} e^{-\lambda t}\esp\left(\|y_0\|^2_{L^2(\dom)}\right),
\qquad \forall t\geq 0.
\end{equation*}
This concludes the proof.
\end{proof}

\section{Almost sure exponential stability}\label{sec:as_s}

The aim of this section is to improve the mean-square exponential stability result obtained in \Cref{theo:main} to almost sure exponential stability. The strategy relies on combining uniform-in-time estimates in expectation with a probabilistic argument of Borel--Cantelli type.

We begin with an auxiliary estimate, which will be used repeatedly in the sequel.

\begin{lem}\label{lem:BDG_sup}
Let $y$ be the solution to \eqref{eq:forward_semilinear_feed_intro}. Then, for any $0\leq T_0<T$ and any $\epsilon>0$, there exists a constant $C>0$, independent of $T_0$ and $T$, such that
\begin{equation}\label{eq:BDG_est}
\esp\left(\sup_{t\in[T_0,T]}\left|\int_{T_0}^t \|y(s)\|^2_{L^2(\dom)}\,\d W(s)\right|\right)
\leq \frac{\epsilon}{2}\,\esp\left(\sup_{s\in[T_0,T]}\|y(s)\|^2_{L^2(\dom)}\right)
+ \frac{C}{2\epsilon}\int_{T_0}^T \esp\|y(s)\|^2_{L^2(\dom)}\,\d s.
\end{equation}
\end{lem}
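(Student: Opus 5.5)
The plan is to apply the Burkholder--Davis--Gundy inequality with exponent $p=1$ to the real-valued continuous local martingale
\[
M(t):=\int_{T_0}^t \|y(s)\|^2_{L^2(\dom)}\,\d W(s),\qquad t\in[T_0,T].
\]
By \Cref{theo:main}, $y\in L^2_{\fil}(\Omega;C([0,T];L^2(\dom)))$, so the integrand is adapted and continuous. The quadratic variation of $M$ is $\langle M\rangle_t=\int_{T_0}^t\|y(s)\|^4_{L^2(\dom)}\d s$. BDG then gives a universal constant $C_1$, independent of $T_0$ and $T$, such that
\[
\esp\Big(\sup_{t\in[T_0,T]}|M(t)|\Big)\le C_1\,\esp\Big(\Big(\int_{T_0}^T\|y(s)\|^4_{L^2(\dom)}\,\d s\Big)^{1/2}\Big).
\]
If one worries that $M$ is only a local martingale, I would first apply this up to a localizing sequence of stopping times and then pass to the limit by monotone convergence. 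All the quantities involved are nonnegative and the right-hand side is finite, so this step is harmless.

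Next, I would split off one factor of $\|y\|^2$ inside the square root:
\[
\Big(\int_{T_0}^T\|y\|^4\,\d s\Big)^{1/2}\le \Big(\sup_{s\in[T_0,T]}\|y(s)\|^2\Big)^{1/2}\Big(\int_{T_0}^T\|y(s)\|^2\,\d s\Big)^{1/2}.
\]
Applying Young's inequality $C_1 AB\le \frac{\epsilon}{2}A^2+\frac{C_1^2}{2\epsilon}B^2$ pointwise in $\omega$ and then taking expectations, I obtain
\[
\esp\Big(\sup|M|\Big)\le \frac{\epsilon}{2}\esp\Big(\sup_{s\in[T_0,T]}\|y(s)\|^2\Big)+\frac{C_1^2}{2\epsilon}\,\esp\int_{T_0}^T\|y(s)\|^2\,\d s.
\]
Fubini allows exchanging the expectation and the time integral, which yields \eqref{eq:BDG_est} with $C=C_1^2$.

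There is no serious obstacle in this argument. The only points needing care are, first, justifying BDG, which requires that the integrand $\|y\|^2$ is progressively measurable and that $\int\|y\|^4\,\d s<\infty$ a.s. Both follow from continuity of paths in $L^2(\dom)$. Second, one should check that the constant comes solely from BDG, so that it is independent of $T_0$ and $T$ (and indeed also of $\lambda$, $a$ and $c$). If the factor $a$ in front of the noise term were to be included, it would simply be absorbed into $C$.
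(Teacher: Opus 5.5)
Your proposal is correct and follows the paper's proof essentially step for step: the Burkholder--Davis--Gundy inequality with $p=1$, the bound $\bigl(\int_{T_0}^T\|y\|^4\,\d s\bigr)^{1/2}\le \sup_{s\in[T_0,T]}\|y(s)\|\,\bigl(\int_{T_0}^T\|y\|^2\,\d s\bigr)^{1/2}$, Young's inequality, and Fubini. You also put the BDG constant $C_1$ inside Young's inequality, which gives $C=C_1^2$; this is slightly cleaner than the paper's write-up, where the BDG constant formally also multiplies the $\frac{\epsilon}{2}$ term.
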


\begin{proof}
By the Burkholder--Davis--Gundy inequality, we have
\begin{align}\label{bdg_estimate}
\esp\left(\sup_{t\in[T_0,T]}\left|\int_{T_0}^t \|y(s)\|^2_{L^2(\dom)}\,\d W(s)\right|\right)
&\leq C\,\esp\left(\left(\int_{T_0}^T \|y(s)\|^4_{L^2(\dom)}\,\d s\right)^{1/2}\right),
\end{align}
for some constant $C>0$ independent of $T_0$ and $T$. Moreover,
\begin{align*}
\left(\int_{T_0}^T \|y(s)\|^4_{L^2(\dom)}\,\d s\right)^{1/2}
\leq \sup_{s\in[T_0,T]}\|y(s)\|_{L^2(\dom)}
\left(\int_{T_0}^T \|y(s)\|^2_{L^2(\dom)}\,\d s\right)^{1/2}.
\end{align*}
Applying Young's inequality and Fubini's theorem, we obtain
\begin{align*}
\esp\left(\left(\int_{T_0}^T \|y(s)\|^4_{L^2(\dom)}\,\d s\right)^{1/2}\right)
&\leq \frac{\epsilon}{2}\,\esp\left(\sup_{s\in[T_0,T]}\|y(s)\|^2_{L^2(\dom)}\right)
+\frac{C}{2\epsilon}\int_{T_0}^T \esp\|y(s)\|^2_{L^2(\dom)}\,\d s.
\end{align*}
Substituting into \eqref{bdg_estimate} yields \eqref{eq:BDG_est}.
\end{proof}

The previous lemma will allow us to control the stochastic integral appearing in It\^{o}'s formula. We can combine it with energy estimates to obtain a uniform bound on arbitrary time intervals.

\begin{prop}\label{prop:tgeq0}
Under the assumptions of \Cref{theo:main}, there exists a constant $C>0$, depending at most on $a$, $c$, $\dom$, and $\dom_0$, such that for any $0\leq T_0<T$, the solution to \eqref{eq:forward_semilinear_feed_intro} satisfies
\begin{equation}\label{boud:sup}
\esp\left(\sup_{t\in[T_0,T]}\|y(t)\|^2_{L^2(\dom)}\right)
\leq C\esp\left(\|y(T_0)\|^2_{L^2(\dom)}\right)
+C\gamma_{\lambda}\int_{T_0}^T\esp\|y(s)\|^2_{L^2(\dom)}\,\d s.
\end{equation}
\end{prop}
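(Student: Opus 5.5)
Apply Itô's formula to $t\mapsto\|y(t)\|^2_{L^2(\dom)}$ on $[T_0,t]$ for $t\in[T_0,T]$, exactly as in the proof of \Cref{lem:high} but with $\beta=0$ and for the full solution $y$ of \eqref{eq:forward_semilinear_feed_intro}. This gives, almost surely,
\begin{align*}
\|y(t)\|^2_{L^2(\dom)} &= \|y(T_0)\|^2_{L^2(\dom)} - 2\int_{T_0}^t\|\nabla y(s)\|^2_{L^2(\dom)}\,\d s + (2c+a^2)\int_{T_0}^t\|y(s)\|^2_{L^2(\dom)}\,\d s\\
&\quad -2\gamma_\lambda\int_{T_0}^t\left(\chi_{\dom_0}P_N y(s),y(s)\right)_{L^2(\dom)}\d s + 2a\int_{T_0}^t\|y(s)\|^2_{L^2(\dom)}\,\d W(s).
\end{align*}
I would drop the gradient term, since it has a favourable sign. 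For the feedback term, Cauchy--Schwarz together with $\|\chi_{\dom_0}P_Ny\|\le\|y\|$ gives the bound $2\gamma_\lambda\int_{T_0}^t\|y\|^2\,\d s$. Since $\gamma_\lambda=Ce^{C\sqrt\lambda}\lambda\ge\lambda>2\tau_1>0$ is bounded below, the constant term $|2c+a^2|$ can be absorbed into $C\gamma_\lambda$. The constant then depends only on $a$, $c$ and $\tau_1$, and hence on $\dom$.

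Next I take the supremum over $t\in[T_0,T]$ and then expectations. The deterministic integrals are monotone in $t$, so they are bounded by their values at $t=T$. The stochastic integral is handled by \Cref{lem:BDG_sup}: its supremum contributes at most
\[
2|a|\left(\frac{\epsilon}{2}\esp\sup_{[T_0,T]}\|y\|^2+\frac{C}{2\epsilon}\int_{T_0}^T\esp\|y\|^2\,\d s\right).
\]
I choose $\epsilon=1/(2|a|)$, so that the first term is $\frac12\esp\sup\|y\|^2$, which can be absorbed into the left-hand side. This leaves
\[
\tfrac12\,\esp\sup_{[T_0,T]}\|y\|^2\le \esp\|y(T_0)\|^2+\left(C\gamma_\lambda+Ca^2\right)\int_{T_0}^T\esp\|y\|^2\,\d s .
\]
The term $Ca^2$ can again be absorbed into $C\gamma_\lambda$. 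Multiplying by $2$ gives \eqref{boud:sup}.

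The main obstacle is the absorption step, which requires $\esp\sup_{[T_0,T]}\|y\|^2<\infty$ a priori. This holds because the weak solution lies in $L^2_\fil(\Omega;C([0,T];L^2(\dom)))$ by \Cref{thm:general_wellposed}. Itô's formula for $\|y\|^2$ is also only formal at the weak-solution level; I would justify it via the standard Itô formula for the variational setting $H_0^1\subset L^2\subset H^{-1}$, or through the Galerkin approximation used in \Cref{app-gal}. A localization by stopping times is available if needed, but the $C([0,T];L^2)$ moment bound should already suffice.
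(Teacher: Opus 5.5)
Your proposal is correct and follows essentially the same route as the paper. You apply It\^o's formula to $\|y\|^2_{L^2(\dom)}$, drop the gradient term, bound the feedback term by $2\gamma_\lambda\int_{T_0}^T\|y\|^2\,\d s$, and control the martingale term through \Cref{lem:BDG_sup} with a small parameter absorbed into the left-hand side. The remaining constants are then absorbed into $C\gamma_\lambda$ via $\gamma_\lambda\ge\lambda>2\tau_1$. Your remark that the absorption step requires $\esp\sup_{[T_0,T]}\|y\|^2<\infty$ a priori, guaranteed by $y\in L^2_{\fil}(\Omega;C([0,T];L^2(\dom)))$, makes explicit a point the paper leaves implicit.
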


\begin{proof}
Let $0\leq T_0<T$ be fixed. Applying It\^{o}'s formula to $t\mapsto \|y(t)\|^2_{L^2(\dom)}$, and using integration by parts, we obtain
\begin{align*}
\|y(t)\|^2_{L^2(\dom)}
&=\|y(T_0)\|^2_{L^2(\dom)}
-2\int_{T_0}^t\|\nabla y(s)\|^2_{L^2(\dom)}\,\d s
+2c\int_{T_0}^t\|y(s)\|^2_{L^2(\dom)}\,\d s \\
&\quad +2\int_{T_0}^t\left(y(s),\mathscr H_{\lambda} y(s)\right)_{L^2(\dom)}\,\d s
+a^2\int_{T_0}^t\|y(s)\|^2_{L^2(\dom)}\,\d s \\
&\quad +2a\int_{T_0}^t \|y(s)\|^2_{L^2(\dom)}\,\d W(s),
\qquad \text{a.s. for all } t\in[T_0,T].
\end{align*}

Taking supremum over $t\in[T_0,T]$ and dropping the negative term involving $\|\nabla y\|^2_{L^2(\dom)}$, we deduce that
\begin{align*}
\sup_{t\in[T_0,T]}\|y(t)\|^2_{L^2(\dom)}
&\leq \|y(T_0)\|^2_{L^2(\dom)}
+2|c|\int_{T_0}^T\|y(s)\|^2_{L^2(\dom)}\,\d s \\
&\quad +2\int_{T_0}^T\|y(s)\|_{L^2(\dom)}\|\mathscr H_{\lambda} y(s)\|_{L^2(\dom)}\,\d s
+a^2\int_{T_0}^T\|y(s)\|^2_{L^2(\dom)}\,\d s \\
&\quad +2|a|\sup_{t\in[T_0,T]}\left|\int_{T_0}^t \|y(s)\|^2_{L^2(\dom)}\,\d W(s)\right|,
\qquad \text{a.s.}
\end{align*}

Next, by the definition of the feedback operator in \eqref{eq:def_feedback_intro},
\begin{align*}
\int_{T_0}^T\|y(s)\|_{L^2(\dom)}\|\mathscr H_{\lambda} y(s)\|_{L^2(\dom)}\,\d s
\leq \gamma_{\lambda}\int_{T_0}^T\|y(s)\|^2_{L^2(\dom)}\,\d s.
\end{align*}
Therefore,
\begin{align*}
\sup_{t\in[T_0,T]}\|y(t)\|^2_{L^2(\dom)}
&\leq \|y(T_0)\|^2_{L^2(\dom)}
+\left(2\gamma_{\lambda}+2|c|+a^2\right)\int_{T_0}^T\|y(s)\|^2_{L^2(\dom)}\,\d s \\
&\quad +2|a|\sup_{t\in[T_0,T]}\left|\int_{T_0}^t \|y(s)\|^2_{L^2(\dom)}\,\d W(s)\right|,
\qquad \text{a.s.}
\end{align*}

Taking expectation and applying \Cref{lem:BDG_sup}, we obtain, for any $\varepsilon>0$,
\begin{align*}
\esp\left(\sup_{t\in[T_0,T]}\|y(t)\|^2_{L^2(\dom)}\right)
&\leq \esp\left(\|y(T_0)\|^2_{L^2(\dom)}\right)
+\left(2\gamma_{\lambda}+2|c|+a^2\right)\int_{T_0}^T\esp\|y(s)\|^2_{L^2(\dom)}\,\d s \\
&\quad +|a|\varepsilon\,\esp\left(\sup_{t\in[T_0,T]}\|y(t)\|^2_{L^2(\dom)}\right)
+\frac{C|a|}{\varepsilon}\int_{T_0}^T\esp\|y(s)\|^2_{L^2(\dom)}\,\d s.
\end{align*}

Choosing $\varepsilon>0$ sufficiently small and absorbing the corresponding term into the left-hand side, we obtain
\begin{align*}
\esp\left(\sup_{t\in[T_0,T]}\|y(t)\|^2_{L^2(\dom)}\right)
&\leq C\esp\left(\|y(T_0)\|^2_{L^2(\dom)}\right)
+C\left(\gamma_{\lambda}+1\right)\int_{T_0}^T\esp\|y(s)\|^2_{L^2(\dom)}\,\d s.
\end{align*}
Finally, recalling \eqref{def:gamma_lambda} and the condition $\lambda>2\tau_1$ (where $\tau_1$ depends only on $\dom$), the term $\gamma_\lambda+1$ can be bounded by $C\gamma_\lambda$ for some $C>0$, which yields \eqref{boud:sup}.
\end{proof}

The estimate above also yields global-in-time bounds. In particular, it provides control of the supremum of the solution on unbounded time intervals. As a direct consequence, we obtain the following estimate, which may be of independent interest.

\begin{cor}\label{cor:tgeqT0}
Under the assumptions of \Cref{theo:main}, there exists a constant $C>0$, depending at most on $a$, $c$, $\dom$, and $\dom_0$, such that for any $T_0\geq 0$, the solution to \eqref{eq:forward_semilinear_feed_intro} satisfies
\begin{equation}\label{eq:sup_from_T0}
\esp\left(\sup_{t\geq T_0}\|y(t)\|^2_{L^2(\dom)}\right)
\leq C e^{C\sqrt{\lambda}} \esp\left(\|y_0\|^2_{L^2(\dom)}\right).
\end{equation}
\end{cor}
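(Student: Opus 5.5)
Our plan is to apply \Cref{prop:tgeq0} on the interval $[T_0,T]$, bound its right-hand side uniformly in $T$ using \Cref{theo:main}, and then let $T\to\infty$ by monotone convergence. Concretely, for any $T>T_0$, \eqref{boud:sup} gives
\begin{equation*}
\esp\left(\sup_{t\in[T_0,T]}\|y(t)\|^2_{L^2(\dom)}\right)
\leq C\esp\left(\|y(T_0)\|^2_{L^2(\dom)}\right)
+C\gamma_{\lambda}\int_{T_0}^T\esp\|y(s)\|^2_{L^2(\dom)}\,\d s .
\end{equation*}

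For the first term we use \eqref{eq:decay_unified} at $t=T_0$, which bounds it by $C e^{C\sqrt{\lambda}}e^{-\lambda T_0}\esp(\|y_0\|^2)\le Ce^{C\sqrt{\lambda}}\esp(\|y_0\|^2)$. For the integral term we again insert \eqref{eq:decay_unified} and integrate the exponential:
\begin{equation*}
\int_{T_0}^T\esp\|y(s)\|^2\,\d s\le Ce^{C\sqrt{\lambda}}\esp(\|y_0\|^2)\int_{T_0}^\infty e^{-\lambda s}\,\d s\le \frac{C e^{C\sqrt{\lambda}}}{\lambda}\esp(\|y_0\|^2).
\end{equation*}
Multiplying by $\gamma_\lambda=Ce^{C\sqrt{\lambda}}\lambda$ from \eqref{def:gamma_lambda}, the factor $\lambda$ cancels and we get a bound of the form $Ce^{2C\sqrt{\lambda}}\esp(\|y_0\|^2)$, which is of the announced form after renaming $C$. (Even without exact cancellation, polynomial factors in $\lambda$ could be absorbed into $e^{C\sqrt\lambda}$ as in the proof of \Cref{theo:main}, using $\lambda>2\tau_1$ to control negative powers.)

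Finally, the resulting bound is independent of $T$, and $T\mapsto\sup_{t\in[T_0,T]}\|y(t)\|^2$ is nondecreasing and converges pointwise to $\sup_{t\geq T_0}\|y(t)\|^2$ (continuity of paths allows taking $T\in\N$). Monotone convergence then yields \eqref{eq:sup_from_T0}. The only point requiring mild care is tracking the constants: we must check that the constant in \Cref{prop:tgeq0} is independent of $T_0,T$ (which it is) and that combining $\gamma_\lambda$ with the integrated decay indeed produces only $e^{C\sqrt\lambda}$ growth; no genuine obstacle is expected.
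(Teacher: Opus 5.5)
Your proposal is correct and follows essentially the same route as the paper's proof. You apply \Cref{prop:tgeq0} on $[T_0,T]$ and bound both terms with \eqref{eq:decay_unified}, using $\int_{T_0}^\infty e^{-\lambda s}\,\d s\le 1/\lambda$ to cancel the factor $\lambda$ in $\gamma_\lambda$, and then let $T\to\infty$ by monotone convergence.
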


\begin{proof}
Let $T>T_0$. Applying \Cref{prop:tgeq0}, we obtain
\begin{equation*}
\esp\left(\sup_{t\in[T_0,T]}\|y(t)\|^2_{L^2(\dom)}\right)
\leq C\esp\left(\|y(T_0)\|^2_{L^2(\dom)}\right)
+C\gamma_\lambda\int_{T_0}^T\esp\|y(s)\|^2_{L^2(\dom)}\,\d s.
\end{equation*}
By \Cref{theo:main},
\begin{equation*}
\esp\left(\|y(T_0)\|^2_{L^2(\dom)}\right)
\leq C e^{C\sqrt{\lambda}}\esp\left(\|y_0\|^2_{L^2(\dom)}\right),
\end{equation*}
and
\begin{equation*}
\int_{T_0}^T\esp\|y(s)\|^2_{L^2(\dom)}\,\d s
\leq C e^{C\sqrt{\lambda}}\esp\left(\|y_0\|^2_{L^2(\dom)}\right)
\int_{T_0}^T e^{-\lambda s}\,\d s.
\end{equation*}
Since $\int_{T_0}^T e^{-\lambda s}\,\d s \leq \int_0^\infty e^{-\lambda s}\,\d s =\frac{1}{\lambda}$ and $\gamma_\lambda = C e^{C\sqrt{\lambda}}\lambda$, we infer
\begin{equation*}
\esp\left(\sup_{t\in[T_0,T]}\|y(t)\|^2_{L^2(\dom)}\right)
\leq C e^{C\sqrt{\lambda}}\esp\left(\|y_0\|^2_{L^2(\dom)}\right),
\end{equation*}
where $C>0$ is independent of $T$. Letting $T\to\infty$ and using monotone convergence yields \eqref{eq:sup_from_T0}.
\end{proof}

We are now in a position to prove the almost sure exponential stability announced in the introduction. The proof relies on the uniform estimate from \Cref{prop:tgeq0},  \Cref{lem:BDG_sup}, and the exponential decay in mean square from \Cref{theo:main}, combined with a Borel--Cantelli type argument.

\begin{proof}[Proof of \Cref{thm:as_intro}]
Let $N\in \mathbb N$ be fixed. A direct computation using It\^{o}'s formula shows that the solution to \eqref{eq:forward_semilinear_feed_intro} satisfies
\begin{align*}
\d\|y(t)\|_{L^2(\dom)}^2
&=\Big(-2\|\nabla y(t)\|^2_{L^2(\dom)}
+2\left\langle \chi_{\dom_0}\mathscr H_{\lambda}y(t),y(t)\right\rangle_{L^2(\dom)}
+\left(2c+a^2\right)\|y(t)\|_{L^2(\dom)}^2\Big)\dt \\
&\quad +2a\|y(t)\|^2_{L^2(\dom)}\d W(t) \\
&\leq \left(2|c|+2\gamma_{\lambda}+a^2\right)\|y(t)\|^2_{L^2(\dom)}\dt
+2a\|y(t)\|^2_{L^2(\dom)}\d W(t),
\qquad \textnormal{a.s.}
\end{align*}
where we have dropped the negative term containing the gradient and used the definition of $\mathscr H_{\lambda}y$ in the last line. Integrating from $N$ to $t$, we obtain
\begin{equation*}
\|y(t)\|^2_{L^2(\dom)}
\leq \|y(N)\|^2_{L^2(\dom)}
+\int_{N}^{t}(2|c|+2\gamma_{\lambda}+a^2)\|y(s)\|^2_{L^2(\dom)}\d s
+\left|2a\int_{N}^{t}\|y(s)\|^2_{L^2(\dom)}\d W(s)\right|,
\end{equation*}
for all $t\geq N$, almost surely. In particular,
\begin{align}\label{eq:estimate_1_as}
\sup_{t\in[N,N+1]}\|y(t)\|^2_{L^2(\dom)}
&\leq \|y(N)\|^2_{L^2(\dom)}
+\int_{N}^{N+1}(2|c|+2\gamma_{\lambda}+a^2)\|y(s)\|^2_{L^2(\dom)}\d s \notag\\
&\quad +\sup_{t\in[N,N+1]}\left|2a\int_{N}^{t}\|y(s)\|^2_{L^2(\dom)}\d W(s)\right|,
\qquad \textnormal{a.s.}
\end{align}

Let $\eta>0$ be a parameter to be chosen later. By \eqref{eq:estimate_1_as}, we have
\begin{align}\label{eq:estimate_2_as}
\mathbb P&\left(\sup_{t\in[N,N+1]}\|y(t)\|^2_{L^2(\dom)}\geq \eta^2\right)\notag\\
&\leq \mathbb P\left(\|y(N)\|^2_{L^2(\dom)}\geq \frac{\eta^2}{3}\right)
+\mathbb P\left(\int_{N}^{N+1}(2|c|+2\gamma_{\lambda}+a^2)\|y(s)\|^2_{L^2(\dom)}\d s\geq \frac{\eta^2}{3}\right)\notag\\
&\quad +\mathbb P\left(\sup_{t\in[N,N+1]}\left|2a\int_{N}^{t}\|y(s)\|^2_{L^2(\dom)}\d W(s)\right|\geq \frac{\eta^2}{3}\right) \\
&=:Q_1+Q_2+Q_3.\notag
\end{align}

We now estimate the terms $Q_i$, $i=1,2,3$. For $Q_1$, by Markov's inequality and \Cref{theo:main}, we readily have
\begin{equation*}
Q_1
\leq \frac{3}{\eta^2}\esp\left(\|y(N)\|^2_{L^2(\dom)}\right)
\leq \frac{C}{\eta^2}e^{C\sqrt{\lambda}}e^{-\lambda N}\esp\left(\|y_0\|^2_{L^2(\dom)}\right).
\end{equation*}

For $Q_2$, we use Markov's inequality again, so
\begin{align*}
Q_2
&\leq \frac{3}{\eta^2}\esp\left(\int_{N}^{N+1}(2|c|+2\gamma_{\lambda}+a^2)\|y(s)\|^2_{L^2(\dom)}\d s\right) \\
&= \frac{3(2|c|+2\gamma_{\lambda}+a^2)}{\eta^2}\int_{N}^{N+1}\esp\left(\|y(s)\|^2_{L^2(\dom)}\right)\d s.
\end{align*}
Now, by \Cref{theo:main},
\begin{equation*}
\esp\left(\|y(s)\|^2_{L^2(\dom)}\right)
\leq C e^{C\sqrt{\lambda}}e^{-\lambda s}\esp\left(\|y_0\|^2_{L^2(\dom)}\right),
\qquad \forall s\geq 0.
\end{equation*}
Therefore,
\begin{align*}
Q_2
&\leq \frac{C(1+\gamma_{\lambda})}{\eta^2}e^{C\sqrt{\lambda}}\esp\left(\|y_0\|^2_{L^2(\dom)}\right)\int_{N}^{\infty}e^{-\lambda s}\,\d s \\
&= \frac{C}{\eta^2}\left(\frac{1+\gamma_{\lambda}}{\lambda}\right)e^{C\sqrt{\lambda}}e^{-\lambda N}\esp\left(\|y_0\|^2_{L^2(\dom)}\right).
\end{align*}
Using \eqref{def:gamma_lambda} and the condition $\lambda>2\tau_1$, we have $\frac{1+\gamma_\lambda}{\lambda}\leq C e^{C\sqrt{\lambda}}$. Hence,
\begin{equation*}
Q_2\leq \frac{C}{\eta^2}e^{C\sqrt{\lambda}}e^{-\lambda N}\esp\left(\|y_0\|^2_{L^2(\dom)}\right).
\end{equation*}

For $Q_3$, we apply Markov's inequality and then \Cref{lem:BDG_sup} on the interval $[N,N+1]$. Thus, for any $\varepsilon>0$,
\begin{align}\label{est_Q3_init}
Q_3
&\leq \frac{C|a|}{\eta^2}
\esp\left(\sup_{t\in[N,N+1]}\left|\int_{N}^{t}\|y(s)\|^2_{L^2(\dom)}\,\d W(s)\right|\right) \notag\\
&\leq \frac{C|a|\varepsilon}{\eta^2}\esp\left(\sup_{t\in[N,N+1]}\|y(t)\|^2_{L^2(\dom)}\right)
+\frac{C|a|}{\varepsilon\eta^2}\int_{N}^{N+1}\esp\|y(s)\|^2_{L^2(\dom)}\,\d s .
\end{align}

To estimate the first term, we apply \Cref{prop:tgeq0} with $T_0=N$ and $T=N+1$. Using the exponential decay in \Cref{theo:main}, we obtain
\begin{align*}
\esp\left(\sup_{t\in[N,N+1]}\|y(t)\|^2_{L^2(\dom)}\right)
&\leq C\esp\left(\|y(N)\|^2_{L^2(\dom)}\right)
+C\gamma_\lambda\int_N^{N+1}\esp\|y(s)\|^2_{L^2(\dom)}\,\d s \\
&\leq C e^{C\sqrt{\lambda}} e^{-\lambda N}
\esp\left(\|y_0\|^2_{L^2(\dom)}\right).
\end{align*}
Similarly,
\begin{equation*}
\int_N^{N+1}\esp\|y(s)\|^2_{L^2(\dom)}\,\d s
\leq C\frac{e^{C\sqrt{\lambda}}}{\lambda}e^{-\lambda N}
\esp\left(\|y_0\|^2_{L^2(\dom)}\right).
\end{equation*}

Substituting these estimates into \eqref{est_Q3_init}, choosing $\varepsilon=1$, and noting that $\lambda^{-1}\le C$ for some $C>0$ only depending on $\dom$, we deduce
\begin{equation*}
Q_3
\leq \frac{C}{\eta^2} e^{C\sqrt{\lambda}} e^{-\lambda N}
\esp\left(\|y_0\|^2_{L^2(\dom)}\right).
\end{equation*}

Combining the estimates for $Q_1$, $Q_2$ and $Q_3$, we arrive at
\begin{equation}\label{eq:est_prob_1_as}
\mathbb P\left(\sup_{t\in[N,N+1]}\|y(t)\|^2_{L^2(\dom)}\geq \eta^2\right)
\leq \frac{C}{\eta^2}e^{C\sqrt{\lambda}}e^{-\lambda N}\esp\left(\|y_0\|^2_{L^2(\dom)}\right),
\end{equation}
where $C>0$ is independent of $N$ and $\lambda$.

We now choose $\eta=\eta_N$ given by
\begin{equation*}
\eta_N^2:=Ce^{C\sqrt{\lambda}}\esp\left(\|y_0\|^2_{L^2(\dom)}\right)N^2e^{-\lambda N},
\end{equation*}
where $C$ is the constant appearing in \eqref{eq:est_prob_1_as}. Then \eqref{eq:est_prob_1_as} yields
\begin{equation*}
\mathbb P\left(\sup_{t\in[N,N+1]}\|y(t)\|^2_{L^2(\dom)}\geq \eta_N^2\right)\leq \frac{1}{N^2}.
\end{equation*}
Since
\begin{equation*}
\sum_{N=1}^{\infty}\frac{1}{N^2}<\infty,
\end{equation*}
the Borel--Cantelli lemma implies that for almost every $\omega\in\Omega$ there exists $N_0=N_0(\omega)\geq 1$ such that for all $N\geq N_0$,
\begin{equation*}
\sup_{t\in[N,N+1]}\|y(t)\|^2_{L^2(\dom)}
\leq Ce^{C\sqrt{\lambda}}\esp\left(\|y_0\|^2_{L^2(\dom)}\right)N^2e^{-\lambda N}.
\end{equation*}

Consequently, for almost every $\omega\in\Omega$, there exists $T_0=T_0(\omega)\geq 0$ such that
\begin{equation*}
\|y(t)\|^2_{L^2(\dom)}
\leq Ce^{C\sqrt{\lambda}}\esp\left(\|y_0\|^2_{L^2(\dom)}\right)t^2e^{-\lambda t}
\qquad \text{for all } t\geq T_0.
\end{equation*}
Indeed, if $t\in[N,N+1]$ with $N\geq N_0(\omega)$, then
\begin{equation*}
\|y(t)\|^2_{L^2(\dom)}
\leq Ce^{C\sqrt{\lambda}}\esp\left(\|y_0\|^2_{L^2(\dom)}\right)N^2e^{-\lambda N}
\leq Ce^{C\sqrt{\lambda}}\esp\left(\|y_0\|^2_{L^2(\dom)}\right)t^2e^{-\lambda t},
\end{equation*}
up to enlarging the constant $C$.

Taking logarithms and dividing by $t$, we obtain
\begin{equation*}
\frac{1}{t}\log \|y(t)\|^2_{L^2(\dom)}
\leq \frac{1}{t}\log\!\left(Ce^{C\sqrt{\lambda}}\esp\left(\|y_0\|^2_{L^2(\dom)}\right)\right)
+\frac{2\log t}{t}
-\lambda,
\end{equation*}
and therefore letting $t\to\infty$
\begin{equation*}
\limsup_{t \to \infty} \frac{1}{t} \log \left( \|y(t)\|^2_{L^2(\dom)} \right) \leq -\lambda
\qquad \textnormal{a.s.}
\end{equation*}
This ends the proof.
\end{proof}

\section{Null-controllability result by a feedback iterative method}\label{sec:nc}

In this section, we prove the null-controllability of system \eqref{eq:stoch-heat-control} by means of a construction based on feedback laws.

\begin{proof}[Proof of \Cref{thm:null_feed}]
For readability, we split the proof into three steps. Without loss of generality, we assume that $1/T = n_T \in \N^*$ (see \Cref{rem:time_T_gen}).

\smallskip
\textit{-- Step 1: Construction of the closed-loop solution.}
Let $\Gamma>0$ be a constant, independent of $T\in(0,1)$, to be fixed later. We define
\begin{equation*}
T_n := T-\frac{1}{n}, \qquad 
\lambda_n := \Gamma^2 n^4, \qquad n\ge n_T.
\end{equation*}
Since $1/T = n_T$, we have $T_{n_T}=0$. We then set
\begin{equation*}
I_n := [T_n,T_{n+1}), \qquad n\ge n_T.
\end{equation*}

For each $n\ge n_T$, we consider on $I_n$ the controlled system
\begin{equation}\label{eq:forward_semilinear_feed_n}
\begin{cases}
\d y = (\Delta y + c y + \chi_{\dom_0} \mathscr H_{\lambda_n} y)\, \dt + a y\, \d W(t)
& \text{in } I_n\times \dom,\\
y = 0 & \text{on } I_n\times \Gamma.
\end{cases}
\end{equation}

Since the operator $y\mapsto \chi_{\dom_0}\mathscr H_{\lambda_n}y$ is linear and bounded in $L^2(\dom)$, it is globally Lipschitz. Therefore, by \Cref{thm:general_wellposed}, for any initial datum in $L^2_{\mathcal F_{T_n}}(\Omega;L^2(\dom))$, system \eqref{eq:forward_semilinear_feed_n} admits a unique weak solution
\begin{equation*}
y \in L^2_{\fil}(\Omega;C(I_n;L^2(\dom))) \cap L^2_{\fil}(I_n;H_0^1(\dom)).
\end{equation*}

We construct inductively a process $y$ on $[0,T)$ as follows. On $I_{n_T}=[0,T_{n_T+1})$, we solve \eqref{eq:forward_semilinear_feed_n} with initial condition $y(0)=y_0$. Assume that $y$ has been constructed on $[0,T_n]$. Since $y(T_n)\in L^2_{\mathcal F_{T_n}}(\Omega;L^2(\dom))$, we apply \Cref{thm:general_wellposed} on $I_n$ with initial condition $y(T_n)$ and obtain a unique solution on $I_n$. Proceeding by induction, we obtain a unique adapted process $y$ on $[0,T)$ associated with the piecewise feedback law
\begin{equation}\label{eq:def_feedback_global}
h(t)=\mathscr H_{\lambda_n}y(t), \qquad t\in I_n,\ n\ge n_T.
\end{equation}

\smallskip
\textit{-- Step 2: Decay of the solution and null controllability.}
By \Cref{theo:main}, for all $t\in I_n$ and $n\ge n_T$, the solution satisfies
\begin{equation}\label{eq:yt1}
\esp\|y(t)\|_{L^2(\dom)}^2
\le C e^{C\Gamma n^2} 
e^{-\frac{\Gamma^2 n^4}{2}(t-T_n)}
\esp\|y(T_n)\|_{L^2(\dom)}^2.
\end{equation}

Evaluating \eqref{eq:yt1} at $t=T_{n+1}$ and using that $T_{n+1}-T_n=\frac{1}{n(n+1)}\ge \frac{1}{2n^2}$, we obtain
\begin{equation}\label{eq:Tn}
\esp\|y(T_{n+1})\|_{L^2(\dom)}^2
\le C e^{C\Gamma n^2} e^{-\frac{\Gamma^2 n^2}{4}}
\esp\|y(T_n)\|_{L^2(\dom)}^2,
\end{equation}
and iterating \eqref{eq:Tn}, we deduce that for all $n\ge n_T+1$,
\begin{equation}\label{eq:Tn_prod}
\esp\|y(T_n)\|_{L^2(\dom)}^2
\le \prod_{k=n_T}^{n-1}\left(C e^{C\Gamma k^2} e^{-\frac{\Gamma^2 k^2}{4}}\right)
\esp\|y_0\|_{L^2(\dom)}^2.
\end{equation}

We now fix $\Gamma>0$ such that
\begin{equation}\label{eq:def-Gam}
C e^{C\Gamma n^2}\le e^{\frac{\Gamma^2}{16}n^2},
\qquad \forall n\in \mathbb N^*.
\end{equation}
Combining \eqref{eq:Tn_prod} and \eqref{eq:def-Gam}, we infer that
\begin{equation}\label{eq:fn-es}
\esp\|y(T_n)\|_{L^2(\dom)}^2
\le \left(\prod_{k=n_T}^{n-1} e^{-\frac{3\Gamma^2}{16}k^2}\right)
\esp\|y_0\|_{L^2(\dom)}^2,
\qquad \forall n\ge n_T+1.
\end{equation}
In particular,
\begin{equation*}
\lim_{n\to\infty}\esp\|y(T_n)\|_{L^2(\dom)}^2=0.
\end{equation*}

Let $t\in[0,T)$ and choose $n\ge n_T$ such that $t\in I_n$. Since $t\ge T_n$, estimate \eqref{eq:yt1} yields
\begin{equation*}
\esp\|y(t)\|_{L^2(\dom)}^2
\le C e^{C\Gamma n^2}\esp\|y(T_n)\|_{L^2(\dom)}^2.
\end{equation*}
Combining this with \eqref{eq:fn-es} and using again \eqref{eq:def-Gam}, we deduce that
\begin{equation}\label{eq:yt_to_zero}
\lim_{t\to T^-}\esp\|y(t)\|_{L^2(\dom)}^2=0.
\end{equation}

On the other hand, by \Cref{theo:main}, the trajectories of $y$ are continuous in $L^2(\dom)$ almost surely. Hence $\|y(T)\|_{L^2(\dom)}^2= \lim_{t\to T^-}\|y(t)\|_{L^2(\dom)}^2$ a.s.
By Fatou's lemma and \eqref{eq:yt_to_zero}, we obtain
\begin{equation*}
\esp\|y(T)\|_{L^2(\dom)}^2
\le \liminf_{t\to T^-}\esp\|y(t)\|_{L^2(\dom)}^2 =0.
\end{equation*}
Therefore, $y(T)=0$ a.s., as required.

\smallskip
\textit{-- Step 3: Adaptedness and integrability of the control.}
Since $y$ is adapted and each operator $\mathscr H_{\lambda_n}$ is deterministic and bounded on $L^2(\dom)$, the control $h$ defined by \eqref{eq:def_feedback_global} is adapted by construction.

Moreover, for each $n\ge n_T$, system \eqref{eq:forward_semilinear_feed_n} is a closed-loop system with parameter $\lambda_n$. Hence, by \Cref{theo:main}, for all $t\in I_n$,
\begin{equation*}
\esp\|h(t)\|_{L^2(\dom_0)}^2
\le C e^{C\sqrt{\lambda_n}} e^{-\lambda_n (t-T_n)}
\esp\|y(T_n)\|_{L^2(\dom)}^2.
\end{equation*}
Since $\lambda_n=\Gamma^2 n^4$, we have $\sqrt{\lambda_n}=\Gamma n^2$, and therefore
\begin{equation*}
\esp\|h(t)\|_{L^2(\dom_0)}^2
\le C e^{C\Gamma n^2} e^{-\Gamma^2 n^4 (t-T_n)}
\esp\|y(T_n)\|_{L^2(\dom)}^2,
\qquad t\in I_n.
\end{equation*}

Integrating over $I_n$, we obtain
\begin{equation*}
\int_{I_n}\esp\|h(t)\|_{L^2(\dom_0)}^2\,dt
\le C e^{C\Gamma n^2}\esp\|y(T_n)\|_{L^2(\dom)}^2
\int_{T_n}^{T_{n+1}} e^{-\Gamma^2 n^4 (t-T_n)}\,dt
\le \frac{C}{\Gamma^2 n^4} e^{C\Gamma n^2}\esp\|y(T_n)\|_{L^2(\dom)}^2.
\end{equation*}

Combining this with \eqref{eq:fn-es} and \eqref{eq:def-Gam}, we obtain
\begin{equation*}
\int_{I_n}\esp\|h(t)\|_{L^2(\dom_0)}^2\,dt
\le \frac{C}{\Gamma^2 n^4}
\exp\!\left(C\Gamma n^2 - c\sum_{k=n_T}^{n-1} k^2\right)
\esp\|y_0\|_{L^2(\dom)}^2.
\end{equation*}
Since $\sum_{k=n_T}^{n-1} k^2 \ge c n^3$ for $n$ large enough, the right-hand side is in the above expression summable in $n$. Hence
\begin{equation*}
\sum_{n=n_T}^{\infty}\int_{I_n}\esp\|h(t)\|_{L^2(\dom_0)}^2\,dt<\infty,
\end{equation*}
and therefore $\int_0^T\esp\|h(t)\|_{L^2(\dom_0)}^2\,dt<\infty$. By Fubini's theorem,
$\esp\left(\int_0^T\|h(t)\|_{L^2(\dom_0)}^2\,dt\right)<\infty$, that is, $h\in L^2_{\fil}(0,T;L^2(\dom_0))$. This completes the proof.
\end{proof}

\begin{rmk}\label{rem:time_T_gen}
The assumption $1/T\in\N^*$ is made only for simplicity. In the general case, one can choose $n\in\N^*$ such that $1/n<T$ and apply the previous construction on the interval $[0,1/n]$ to drive the solution to zero at time $1/n$. Since the system is linear, taking $h\equiv 0$ on $(1/n,T)$, the corresponding solution remains identically zero on the remaining time interval. Therefore, the result holds for any $T>0$.
\end{rmk}

\appendix

\section{Stability results without control}\label{app_stab}

In this appendix, we collect some classical stability properties of \eqref{eq:spde} for completeness.

\subsection{Mean-square exponential stability}

We derive the classical mean-square stability result for the solution to \eqref{eq:spde}. More precisely, we show that for
\begin{equation}\label{mu_choice}
\mu := 2(\tau_1 - c) - a^2,
\end{equation}
the solution satisfies
\begin{equation}\label{eq:ms_target}
\mathbb{E}\big(\|y(t)\|^2_{L^2(\dom)}\big)
\leq e^{-\mu t}\mathbb{E}\big(\|y_0\|^2_{L^2(\dom)}\big),
\quad \text{for all } t\geq 0.
\end{equation}

Using \eqref{eq:spde} and It\^o's formula, we obtain that, almost surely,
\begin{align}\label{ito_l2_norm}
\d\|y(t)\|_{L^2(\dom)}^2
&=2\left\langle y(t),\Delta y(t)+cy(t)\right\rangle_{L^2(\dom)}\dt
+2a\|y(t)\|^2_{L^2(\dom)}\d W(t)
+a^2\|y(t)\|^2_{L^2(\dom)}\dt \notag\\
&=\left(-2\|\nabla y(t)\|^2_{L^2(\dom)}+(2c+a^2)\|y(t)\|_{L^2(\dom)}^2\right)\dt
+2a\|y(t)\|^2_{L^2(\dom)}\d W(t),
\end{align}
where we used integration by parts. Next, applying It\^o's formula to the weighted energy $t\mapsto e^{\mu t}\|y(t)\|^2_{L^2(\dom)}$, we obtain
\begin{align*}
\d\left(e^{\mu t}\|y(t)\|_{L^2(\dom)}^2\right)
&= e^{\mu t}\left[\mu\|y(t)\|^2_{L^2(\dom)}
-2\|\nabla y(t)\|^2_{L^2(\dom)}
+(2c+a^2)\|y(t)\|^2_{L^2(\dom)}\right]\dt \\
&\quad + 2a e^{\mu t}\|y(t)\|^2_{L^2(\dom)}\d W(t),
\quad \text{a.s.}
\end{align*}

Integrating over $[0,t]$, using Poincaré's inequality, and recalling the initial condition, we deduce
\begin{align*}
e^{\mu t}\|y(t)\|_{L^2(\dom)}^2
&\leq \|y_0\|^2_{L^2(\dom)}
+\int_{0}^{t} e^{\mu s}
\left[\mu -2\tau_1 + (2c+a^2)\right]\|y(s)\|^2_{L^2(\dom)}\,\d s \\
&\quad + 2a\int_0^{t}e^{\mu s}\|y(s)\|^2_{L^2(\dom)}\d W(s),
\quad \text{a.s.}
\end{align*}
With the choice of $\mu$ in \eqref{mu_choice}, the drift term vanishes. Taking expectation yields \eqref{eq:ms_target} as desired. 

\subsection{Almost sure exponential stability}

We now derive the almost sure exponential stability estimate. More precisely, we show that
\begin{equation}\label{eq:as_target}
\limsup_{t\to\infty}\frac{1}{t}\log \|y(t)\|^2_{L^2(\dom)} \leq -\mu
\quad \text{a.s.},
\end{equation}
for a suitable constant $\mu>0$.

For this, we follow some ideas from \cite{MSY14}. We remark that our argument avoids proving that the solution $y$ to \eqref{eq:spde} satisfies
\begin{equation*}
\mathbb P\Big(\|y(t)\|_{L^2(\dom)}\neq 0 \textnormal{ for all } t\geq 0 \,\big|\, y_0\neq 0\Big)=1,
\end{equation*}
which can be difficult to establish and is nontrivial in many cases; see, for instance, \cite{CMR06}. The proof below relies only on direct computations.

Let $\eta>0$ be a parameter that will be fixed later and define $S(t):=\|y(t)\|_{L^2(\dom)}^2+e^{-\eta t}$. Note that $S(t)>0$ for all $t\geq 0$ and
\begin{equation}\label{iden_St}
\|y(t)\|^2_{L^2(\dom)}\leq S(t).
\end{equation}

Using It\^o's formula and identity \eqref{ito_l2_norm}, we compute $\d \log(S(t))$, which yields
\begin{align*}
\d\log(S(t))
&=-\frac{\eta e^{-\eta t}}{S(t)}\dt
+\frac{1}{S(t)}\left(-2\|\nabla y(t)\|^2_{L^2(\dom)}+\left(2c+a^2\right)\|y(t)\|^2_{L^2(\dom)}\right)\dt \\
&\quad +\frac{2a}{S(t)}\|y(t)\|^2_{L^2(\dom)}\d W(t)
-\frac{2a^2}{S^2(t)}\|y(t)\|^4_{L^2(\dom)}\dt
\end{align*}
almost surely and for all $t\geq 0$. Integrating in time, this gives
\begin{align}\label{eq:logSt}
\log(S(t))
&=\log\left(\|y_0\|^2_{L^2(\dom)}+1\right)
+\int_0^t\frac{1}{S(s)}\left(-\eta e^{-\eta s}-2\|\nabla y(s)\|^2_{L^2(\dom)}+\left(2c+a^2\right)\|y(s)\|^2_{L^2(\dom)}\right)\d s \notag\\
&\quad -\int_0^t \frac{2a^2}{S^2(s)}\|y(s)\|^4_{L^2(\dom)}\d s
+\int_0^{t}\frac{2a}{S(s)}\|y(s)\|^2_{L^2(\dom)}\d W(s),
\quad \textnormal{a.s.\quad $\forall t\geq 0$}.
\end{align}

Let us denote the last term in the above equation by
\begin{equation*}
M(t):=\int_0^t\frac{2a}{S(s)}\|y(s)\|^2_{L^2(\dom)}\d W(s).
\end{equation*}
It is clear that $M(t)$ is a continuous martingale with $M(0)=0$. Indeed, by \eqref{iden_St},
\begin{equation*}
q(t):=\int_0^t \frac{4a^2}{S^2(s)}\|y(s)\|^4_{L^2(\dom)}\d s\leq 4 a^2 t <+\infty
\end{equation*}
for any $t>0$, and the claim follows. Thus, from the exponential martingale inequality, we have that for any positive numbers $\tau$, $\epsilon$ and $k$,
\begin{equation}\label{exp_mart_ineq}
\mathbb P\left(\max_{t\in[0,\tau]}\left[M(t)-\frac{\epsilon}{2}q(t)\right]\geq k\right)\leq e^{-\epsilon k}.
\end{equation}

Let us choose $\epsilon\in(0,1/2)$ arbitrary and set $k=\frac{2\log N}{\epsilon}$ with $N\geq 1$ any integer. Then, from \eqref{exp_mart_ineq}, we have
\begin{equation}\label{exp_mart_ineq_2}
\mathbb P\left(\max_{t\in[0,N]}\left[M(t)-\frac{\epsilon}{2}q(t)\right]\geq \frac{2}{\epsilon}\log(N)\right)\leq \frac{1}{N^2}.
\end{equation}
By the Borel--Cantelli lemma, it follows that for almost all $\omega\in \Omega$, there exists an integer $N_0=N_0(\omega)$ such that
\begin{equation}\label{est_M}
M(t)\leq \frac{\epsilon}{2}q(t)+\frac{2}{\epsilon}\log(N)
\end{equation}
for all $0\leq t\leq N$ with $N\geq N_0$.

Putting together \eqref{eq:logSt}, \eqref{est_M}, and using Poincaré's inequality, we get
\begin{align}\label{eq:logSt_inter}
\log(S(t))
&\leq \log\left(\|y_0\|^2_{L^2(\dom)}+1\right)
+\int_0^t\frac{1}{S(s)}\left(-\eta e^{-\eta s}+\left(-2\tau_1+2c+a^2\right)\|y(s)\|_{L^2(\dom)}^2\right)\d s \notag\\
&\quad -2a^2(1-\epsilon)\int_0^t \frac{1}{S^2(s)}\|y(s)\|^4_{L^2(\dom)}\d s
+\frac{2}{\epsilon}\log(N),
\end{align}
for all $t\in[0,N]$ with $N\geq N_0$. Let us define $z(s):=e^{\eta s}\|y(s)\|^2_{L^2(\dom)}$. Then \eqref{eq:logSt_inter} can be rewritten as
\begin{align}\label{eq:est_St_step3}
\log(S(t))
&\leq \log\left(\|y_0\|^2_{L^2(\dom)}+1\right)+\int_0^t G(z(s))\d s +\frac{2}{\epsilon}\log(N),
\end{align}
where the function $G:\mathbb R_{+}\to \mathbb R$ is defined by
\begin{equation*}
G(u):=\frac{-\eta}{u+1}+\frac{\kappa_1 u}{u+1}-\frac{\kappa_2 u^2}{(u+1)^2},
\end{equation*}
and $\kappa_1:=-2\tau_1+2c+a^2$ and $\kappa_2:=2a^2(1-\epsilon)$.

Let us set $\eta=2\kappa_2-\kappa_1$. We recall that this parameter has to be positive. By hypothesis, and since $\epsilon\in(0,1/2)$,
\begin{equation*}
\eta=4a^2(1-\epsilon)+2\tau_1-2c-a^2
\geq a^2+2\tau_1-2c>0,
\end{equation*}
as needed. On the other hand, a straightforward computation yields
\begin{equation*}
G^\prime(u)=\frac{\left(\kappa_1-2\kappa_2+\eta\right)u+\kappa_1+\eta}{(u+1)^3}
=\frac{2\kappa_2}{(u+1)^3}>0 \quad\textnormal{for all } u\in \mathbb R_{+},
\end{equation*}
by the choice of $\eta$ and the definition of $\kappa_2$. Thus, $G(u)$ is non-decreasing and
\begin{equation*}
G(u)\leq \kappa_1-\kappa_2:=\lim_{u\to+\infty}G(u).
\end{equation*}
This, combined with \eqref{eq:est_St_step3}, gives
\begin{equation*}
\log(S(t))\leq \log\left(\|y_0\|^2_{L^2(\dom)}+1\right)+t(\kappa_1-\kappa_2) +\frac{2}{\epsilon}\log(N),
\end{equation*}
for all $t\in[0,N]$ with $N\geq N_0$. Note in particular that if $t\in[N-1,N]$,
\begin{equation*}
\frac{1}{t}\log(S(t))
\leq \frac{1}{t}\log\left(\|y_0\|^2_{L^2(\dom)}+1\right)+(\kappa_1-\kappa_2) +\frac{2}{\epsilon}\frac{\log(t+1)}{t}.
\end{equation*}
Passing to the limit as $t\to+\infty$ in the above expression and recalling the definitions of $\kappa_1$ and $\kappa_2$, we obtain
\begin{equation*}
\limsup_{t\to+\infty}\frac{1}{t}\log(S(t))
\leq -2\tau_1+2c-a^2(1-\epsilon).
\end{equation*}
Using \eqref{iden_St} and recalling that $\epsilon\in(0,1/2)$ was arbitrary, we conclude by letting $\epsilon\to 0^+$ that \eqref{eq:as_target} holds with $\mu=2(\tau_1-c)+a^2$.

\section{Sketch of the proof of \Cref{lem:solu_high}}
\label{app-gal}

The proof can be done by following a standard Galerkin approximation, so we only outline the main steps. Since the initial datum $P_N^\bot y_0$ has no low-frequency components, and the forcing term $P_N^\bot(\chi_{\dom_0}P_N(y))$ also belongs to the high-frequency subspace, the Galerkin approximation can be restricted to the modes $\{e_j\}_{j=N+1}^m$. Let $m\geq N+1$ be a fixed natural number and define
\begin{equation}
z^m=\sum_{j=N+1}^{m} y_j e_j.
\end{equation}
where $y_j$ denotes the solution of \eqref{eq:decomp_low}. Observe that, for all $k\in\inter{N+1,m}$,
\begin{align*}
\left(
\sum_{j=N+1}^m \left(\sum_{i=1}^{N}y_i(e_i,e_j)_{L^2(\dom_0)}\right)e_j,
e_k
\right)_{L^2(\dom)}
&=
\sum_{i=1}^{N}y_i (e_i,e_k)_{L^2(\dom_0)} \\
&=
\left(\chi_{\dom_0}P_N(y),e_k\right)_{L^2(\dom)} \\
&=
\left(P_N^\bot(\chi_{\dom_0}P_N(y)),e_k\right)_{L^2(\dom)}.
\end{align*}
Here we used that $k\geq N+1$, so that $(P_N^\bot v,e_k)_{L^2(\dom)}=(v,e_k)_{L^2(\dom)}$ for every $v\in L^2(\dom)$.

Hence, by a direct computation, $z^m$ satisfies the set of equations
\begin{align}\notag
\d\left(z^m,e_i\right)_{L^2(\dom)}
&=\left(\Delta z^m,e_i\right)_{L^2(\dom)}\dt
+c\left(z^m,e_i\right)_{L^2(\dom)}\dt
-\gamma_{\lambda}\left(P_N^\bot(\chi_{\dom_0}P_N y),e_i\right)_{L^2(\dom)}\dt \\ \label{weak_form}
&\quad + a\left(z^m,e_i\right)_{L^2(\dom)}\d{W}(t), 
\qquad i\in\inter{N+1,m}.
\end{align}

Adapting, for instance, the arguments in \cite[Proposition 2.3]{GCL15}, one shows that  $\{z^m\}_{m=N+1}^{\infty}$ is a Cauchy sequence in the space
\[
L^2_{\fil}(\Omega;C([0,T];L^2(\dom)))\cap L^2_{\fil}(0,T;H_0^1(\dom)),
\]
and therefore converges strongly to a limit $z$ in this space. Moreover, since
\[
\left(z^m(0),e_j\right)_{L^2(\dom)}
=(y_0,e_j)_{L^2(\dom)}
=\left(P_N^\bot y_0,e_j\right)_{L^2(\dom)},
\qquad j\in\inter{N+1,m},
\]
we can pass to the limit in \eqref{weak_form} and conclude that $z$ is the weak solution to \eqref{eq:proy_high}.

%

\bibliographystyle{alpha}
\small{\bibliography{bib_stab}}

\bigskip

\begin{flushleft}

\textbf{Víctor Hernández-Santamaría} and \textbf{Liliana Peralta}\\
Departamento de Matem\'aticas, Facultad de Ciencias\\
Universidad Nacional Autónoma de México \\
Circuito Exterior, C.U.\\
04510, Coyoacán, CDMX, Mexico\\
\texttt{victor.santamaria@ciencias.unam.mx \\ lylyaanaa@ciencias.unam.mx}

\bigskip
\bigskip

\textbf{Kévin Le Balc'h}\\
Laboratoire Jacques-Louis Lions \\
Inria, Sorbonne Université\\
Université de Paris, CNRS \\
Paris, France\\
\texttt{kevin.le-balc-h@inria.fr}

\end{flushleft}

\end{document}